\newtheorem{thm}{Theorem}[section]
\newtheorem{lemma}{Lemma}[section]
\newtheorem{prob}{Problem}
\newtheorem{remark}{Remark}
\begin{document}
	
	\baselineskip 16pt
	\newcommand{\la}{\lambda}
	\newcommand{\si}{\sigma}
	\newcommand{\ol}{1-\lambda}
	\newcommand{\be}{\begin{equation}}
	\newcommand{\ee}{\end{equation}}
	\newcommand{\bea}{\begin{eqnarray}}
	\newcommand{\eea}{\end{eqnarray}}
	\newcommand{\bL}{\b{\textit{L}}}
	\newcommand{\bN}{\b{\textit{N}}}
	\newcommand{\bB}{\b{\textit{B}}}

\title{\bf\Large Graphs with three distinct distance eigenvalues}

\date{}

\author{
	Yuke Zhang  ~and Huiqiu Lin\footnote{Supported
		by the National Natural Science Foundation of China (Nos. 11771141, 12011530064 and 11871015).\quad\quad\quad E-mail: huiqiulin@126.com (H.Q. Lin),~zhang\_yk1029@163.com (Y.K. Zhang)}\\
	{\footnotesize School of Mathematics,
		East China University of Science and Technology,}\\
	{\footnotesize Shanghai 200237, P.R.~China}
}
\maketitle
\vspace{-9mm}

\maketitle
	
\begin{abstract}
In this paper, some special distance spectral properties of graphs are considered. Concretely, we recursively construct an infinite family of trees with distance eigenvalue $-1$, and determine all $\{C_3,C_4\}$-free connected graphs with three distinct distance eigenvalues of which the smallest one is equal to $-3$, which partially answers a problem posed by Koolen, Hayat and Iqbal [Linear Algebra Appl. 505 (2016) 97--108]. Furthermore, we characterize all trees with three distinct distance eigenvalues.

\vspace{5mm}
\noindent {\bf Keywords:}
Distance eigenvalues, distinct eigenvalues
\vspace{3mm}

\noindent{\bf 2000 Mathematics Subject Classification:} 05C50
\end{abstract}

\baselineskip=0.30in

\section{Introduction}
 Let $G=(V(G), E(G))$ be a connected graph with vertex set
$V(G)=\{v_{1}, v_{2}, \ldots,$ $v_{n}\}$ and edge set $E(G)$.
The \emph{distance}  between $v_i$ and $v_j$, denoted by $d_{i,j}$, is the length of a shortest path from $v_i$ to $v_j$. The \emph{distance matrix} of $G$,
denoted by $D(G)$, is the $n\times n$ real symmetric matrix whose $(i,\,j)$-entry is $d_{i,j}$.
The multiset  $\{\lambda_1^{m_1}(D(G)),\ldots,\lambda_k^{m_k}(D(G))\}$ is called the \emph{distance spectrum} of $G$, and denoted by $Spec(D(G))$,  where  $\lambda_1(D(G))>\cdots>\lambda_k(D(G))$ are all distinct eigenvalues of $D(G)$, and $m_i$ is the multiplicity of $\lambda_i(D(G))$.

The original study of distance eigenvalues was launched around the trees and can be traced back to the paper of Graham and
Pollack \cite{GP} in which they obtained a very interesting and excellent result that the
determinant of the distance matrix of a tree depends only on the number of vertices, and not
on the structure of the tree.
\begin{thm}[\cite{GP}]\label{dett}
	Let $T$ be a tree with $n$ vertices and $ D(T) $ be its distance matrix. Then
$$det(D(T))=(-1)^{n-1}(n-1)2^{n-2}. $$
\end{thm}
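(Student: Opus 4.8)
The plan is to induct on the number of vertices $n$, removing a pendant vertex at each step; the base cases are checked directly. For $n=1$ the matrix $D(T)$ is the $1\times1$ zero matrix, so $\det D(T)=0$, matching $(-1)^{0}(1-1)2^{-1}=0$; for $n=2$ one has $D(T)=\left(\begin{smallmatrix}0&1\\1&0\end{smallmatrix}\right)$ with $\det D(T)=-1=(-1)^{1}\cdot 1\cdot 2^{0}$.

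For the inductive step, fix $n\ge 3$, let $v$ be a pendant vertex of $T$ with unique neighbour $u$, and write $T'=T-v$, a tree on $n-1$ vertices. The decisive structural fact is that every shortest path from another vertex $v_i$ to the leaf $v$ passes through $u$, so $d_{i,v}=d_{i,u}+1$ for all $i\neq v$. Ordering $v$ last, I would perform the determinant-preserving operations $R_v\leftarrow R_v-R_u$ followed by $C_v\leftarrow C_v-C_u$. Using the relation above, these reduce the last row and column to all $1$'s off the diagonal while leaving the principal submatrix indexed by $V(T')$ untouched, transforming $D(T)$ into the bordered matrix
\[
\begin{pmatrix} D(T') & \mathbf 1 \\ \mathbf 1^{T} & -2 \end{pmatrix},
\]
where $\mathbf 1$ denotes the all-ones column vector of length $n-1$; in particular $\det D(T)$ equals the determinant of this matrix.

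Since $\det D(T')\neq 0$ by the inductive hypothesis, $D(T')$ is invertible and the Schur-complement expansion gives
\[
\det D(T)=\det D(T')\,\bigl(-2-\mathbf 1^{T}D(T')^{-1}\mathbf 1\bigr).
\]
The problem thus reduces to evaluating the scalar $\mathbf 1^{T}D(T')^{-1}\mathbf 1$, and I claim that for every tree on $m$ vertices this quantity equals $2/(m-1)$. Granting this with $m=n-1$, the bracket becomes $-2-\tfrac{2}{n-2}=-\tfrac{2(n-1)}{n-2}$, and multiplying by the inductive value $\det D(T')=(-1)^{n-2}(n-2)2^{n-3}$ collapses precisely to $(-1)^{n-1}(n-1)2^{n-2}$, closing the induction.

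The main obstacle is therefore the auxiliary identity $\mathbf 1^{T}D^{-1}\mathbf 1=2/(m-1)$, equivalently the statement that the sum of all cofactors of $D$ equals $(-1)^{m-1}2^{m-1}$, since this bilinear form must be controlled alongside the determinant itself. One clean way to dispose of it is the Graham--Lov\'asz inverse formula $D^{-1}=-\tfrac12 L+\tfrac{1}{2(m-1)}\tau\tau^{T}$, where $L$ is the Laplacian of the tree and $\tau_i=2-\deg(v_i)$: since $\mathbf 1^{T}L\mathbf 1=0$ and $\mathbf 1^{T}\tau=2m-\sum_i\deg(v_i)=2$, it yields $\mathbf 1^{T}D^{-1}\mathbf 1=\tfrac{1}{2(m-1)}(\mathbf 1^{T}\tau)^{2}=2/(m-1)$ at once. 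Alternatively, one can prove the cofactor version by a second induction driven by the same pendant-vertex reduction, carrying the determinant and the cofactor sum together as a coupled pair; I would expect keeping this second quantity in lockstep with the determinant to be the most delicate bookkeeping in the argument.
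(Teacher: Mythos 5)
Your proof is correct, but there is nothing in the paper to compare it against: Theorem \ref{dett} is quoted there as a known result of Graham and Pollack \cite{GP}, with no proof supplied. So the fair comparison is with the classical argument behind \cite{GP}. Your skeleton --- induction on $n$, deleting a pendant vertex $v$ with neighbour $u$, using $d_{i,v}=d_{i,u}+1$ to reduce $D(T)$ by one row and one column operation to the bordered matrix $\left(\begin{smallmatrix} D(T') & \mathbf 1\\ \mathbf 1^{T} & -2\end{smallmatrix}\right)$ --- is exactly the classical skeleton, and your reduction is computed correctly. Where you diverge is the border term: the classical proof expands $\det D(T)=-2\det D(T')-\mathbf 1^{T}\mathrm{adj}(D(T'))\mathbf 1$ and carries the cofactor sum $\mathbf 1^{T}\mathrm{adj}(D(T'))\mathbf 1=(-1)^{m-1}2^{m-1}$ through the induction as a second invariant (the ``coupled pair'' you sketch only as an alternative), whereas you evaluate $\mathbf 1^{T}D(T')^{-1}\mathbf 1=2/(m-1)$ by invoking the Graham--Lov\'asz inverse formula. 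That is legitimate, with two caveats you should make explicit. First, circularity must be ruled out: \cite{GL} postdates and builds on \cite{GP}, so you need to note that the inverse formula admits a proof independent of the determinant theorem --- it does, since it can be verified by direct multiplication using the local tree identities $LD=\tau\mathbf 1^{T}-2I$ and $D\tau=(m-1)\mathbf 1$, neither of which involves $\det D$. Second, the full inverse formula is heavier than needed: the single elementary identity $D\tau=(m-1)\mathbf 1$ (a one-line edge count, summing $d_{ij}+d_{ik}=2d_{ij}+1$ over the edges $jk$ of the tree rooted at $i$), combined with the invertibility of $D(T')$ that your inductive hypothesis already provides, gives $D(T')^{-1}\mathbf 1=\tau/(m-1)$ and hence $\mathbf 1^{T}D(T')^{-1}\mathbf 1=(\mathbf 1^{T}\tau)/(m-1)=2/(m-1)$ directly. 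With either patch your induction closes exactly as you computed.
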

This insightful result made distance matrix spectral properties a research subject of
great interest and motivated lots of researches on distance matrix of a tree such as the inverse \cite{GL} and the characteristic polynomial \cite{EGG,GL,HMG,GP}.
Another remarkable result is from Merris \cite{merris}, which stated a relation between the distance eigenvalues and Laplacain eigenvalues of a tree.
\begin{thm}[\rm\cite{merris}]\label{mer}
	Let $G$ be a tree of order $n$. Let
	$\lambda_1(D(G))\geq\cdots\geq \lambda_n(D(G))$
	be the eigenvalues of $D(G)$ and let
	$\mu_1\geq \mu_2\geq\cdots\geq\mu_{n-1}\geq0$
	be the Laplacain eigenvalues of $ G $. Then
	$$
	0>\frac{-2}{\mu_1}>\lambda_2(D(G))\geq\frac{-2}{\mu_2}\geq\cdots\geq\lambda_{n-1}(D(G))\geq\frac{-2}{\mu_{n-1}}\geq\lambda_{n}(D(G)).
	$$
\end{thm}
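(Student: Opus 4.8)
The plan is to reduce the whole interlacing to a single rank-one identity linking the distance matrix and the Laplacian of a tree, and then to apply the interlacing theorem for rank-one perturbations. The key ingredient is the Graham--Lov\'asz formula for the inverse distance matrix of a tree \cite{GL}. Writing $L=L(G)$ for the Laplacian, $d_i=\deg(v_i)$, $D=D(G)$, and $\tau=(2-d_1,\dots,2-d_n)^{T}$, this formula reads $D^{-1}=-\tfrac12 L+\tfrac{1}{2(n-1)}\tau\tau^{T}$; note that $D$ is invertible by Theorem~\ref{dett}, its determinant being nonzero. Rearranging, $2D^{-1}=-L+\tfrac{1}{n-1}\tau\tau^{T}$, so the symmetric matrix $2D^{-1}$ arises from $-L$ by adding the positive semidefinite rank-one matrix $\tfrac{1}{n-1}\tau\tau^{T}$. (This identity is classical: one may quote \cite{GL} or verify $D\cdot D^{-1}=I$ directly using the geodesic relations in a tree.)

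With this in hand, the second step is interlacing. Denote by $\beta_1\ge\cdots\ge\beta_n$ the eigenvalues of $2D^{-1}$ and by $\alpha_1\ge\cdots\ge\alpha_n$ those of $-L$. Since the perturbation is positive semidefinite of rank one, Weyl's inequalities give the one-step interlacing $\beta_1\ge\alpha_1\ge\beta_2\ge\alpha_2\ge\cdots\ge\beta_n\ge\alpha_n$. The eigenvalues of $-L$ are exactly $0\ge -\mu_{n-1}\ge\cdots\ge -\mu_1$, the zero coming from $L\mathbf 1=0$, and those of $2D^{-1}$ are the numbers $2/\lambda_i(D)$.

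Third, I would carry out the sign and reciprocal bookkeeping. Because the bump is positive semidefinite, $\beta_1\ge\alpha_1=0$ while $\beta_2,\dots,\beta_n\le\alpha_1=0$; as $D^{-1}$ is nonsingular no $\beta_i$ vanishes, so $2D^{-1}$---and hence $D$---has exactly one positive eigenvalue $\lambda_1(D)$ and $n-1$ negative eigenvalues $\lambda_2(D)\ge\cdots\ge\lambda_n(D)$. Setting aside the single positive eigenvalue, which exceeds every term of the chain, the remaining interlacing relates the $n-1$ negative numbers $2/\lambda_k(D)$ to the $n-1$ negative numbers $-\mu_j$. Applying the order-reversing involution $x\mapsto 2/x$ on the negative axis sends each $2/\lambda_k(D)$ to $\lambda_k(D)$ and each $-\mu_j$ to $-2/\mu_j$; reading the transformed interlaced list in decreasing order produces precisely $-2/\mu_1\ge\lambda_2(D)\ge -2/\mu_2\ge\cdots\ge -2/\mu_{n-1}\ge\lambda_n(D)$, the body of the claim.

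Finally, the two strict inequalities at the top remain, and I expect their strictness to be the main obstacle. The bound $0>-2/\mu_1$ is immediate since $\mu_1>0$ for a connected graph. The delicate point is $-2/\mu_1>\lambda_2(D)$, equivalently that the perturbation strictly raises the smallest eigenvalue of $-L$. By the secular-equation description of a rank-one update, this is strict exactly when $\tau$ is not orthogonal to the eigenspace of the largest Laplacian eigenvalue $\mu_1$. The companion non-orthogonality at the top, $\tau^{T}\mathbf 1=\sum_i(2-d_i)=2\ne 0$, is what already forces $\lambda_1(D)>0$ (consistent with Perron--Frobenius for the nonnegative irreducible matrix $D$); but extracting the corresponding non-degeneracy at the bottom, and thereby ruling out the collision $-2/\mu_1=\lambda_2(D)$, is where the structure of trees genuinely enters and is the step I expect to require the most care. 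All remaining inequalities are allowed to be equalities, as the path $P_3$ already shows, where $-2/\mu_2=\lambda_3(D)=-2$.
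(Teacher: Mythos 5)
Your machinery is set up correctly, and the paper itself offers no proof to compare against (the theorem is quoted from Merris); the standard argument is indeed of exactly this type, e.g.\ via the identity $M^{T}DM=-2I$ for the oriented incidence matrix $M$ of a tree, which exhibits the numbers $-2/\mu_i$ as the eigenvalues of a compression of $D$ onto $\mathbf{1}^{\perp}$, or equivalently via your rank-one form of the Graham--Lov\'asz formula. Your Weyl-plus-reciprocal bookkeeping correctly delivers the chain $0>-2/\mu_1\ge\lambda_2(D)\ge-2/\mu_2\ge\cdots\ge-2/\mu_{n-1}\ge\lambda_n(D)$. But the proof is incomplete exactly where you flag it: the strict inequality $-2/\mu_1>\lambda_2(D)$ is never established, only reduced to the condition that $\tau$ not be orthogonal to the $\mu_1$-eigenspace of $L$ (a reduction which, note, also requires $\mu_1$ to be a simple eigenvalue; this is true for trees because $L$ is similar, via a $\pm1$ diagonal matrix coming from the bipartition, to the signless Laplacian $\Delta+A$, an irreducible nonnegative matrix). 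So as written you have proved a strictly weaker statement than the one quoted.

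More importantly, the missing step cannot be supplied, because the strict inequality is false as stated. Consider the double star $S_{2,2}$, whose distance spectrum is computed in this very paper: $\{10,\ \tfrac{\sqrt{17}-5}{2},\ -1,\ [-2]^2,\ -\tfrac{\sqrt{17}+5}{2}\}$, so $\lambda_2(D)=\tfrac{\sqrt{17}-5}{2}$. Its Laplacian spectrum is $\{\tfrac{5+\sqrt{17}}{2},\ 3,\ 1,\ 1,\ \tfrac{5-\sqrt{17}}{2},\ 0\}$, hence $-2/\mu_1=-4/(5+\sqrt{17})=\tfrac{\sqrt{17}-5}{2}=\lambda_2(D)$: equality holds. (Even $P_2$ fails the quoted form, since there $-2/\mu_1=-1=\lambda_2$.) This is precisely your degenerate case realized: the $\mu_1$-eigenvector $z$ of $L(S_{2,2})$ is antisymmetric under the automorphism exchanging the two halves of the double star, while $\tau=2\mathbf{1}-d$ is symmetric, so $\tau^{T}z=0$, the rank-one bump does not move the bottom eigenvalue of $-L$, and $z$ is a common eigenvector of $L$ (for $\mu_1$) and $D$ (for $-2/\mu_1$). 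The correct form of Merris's theorem --- and the statement your argument actually proves --- has ``$\ge$'' in place of the first ``$>$''; the strict version quoted here is an over-claim, though a harmless one for this paper, since its only use of the theorem (bounding the number of distinct Laplacian eigenvalues of $T$ by five in the proof of Theorem \ref{3tree}) needs only the non-strict chain.
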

Since then, the study of distance eigenvalues of a graph has received much more attention.
These two theorems
proved by Graham and Pollack \cite{GP} and Merris \cite{merris} are instructive and also play important roles in our follow-up proofs.
For more results on the distance matrix and its spectral properties, we refer the reader to two  surveys \cite{AH,LSXZ}.


It is well-known that the distance spectrum
of any graph contains at least two distinct eigenvalues since the diagonal entries of the distance matrix are all 0. So another problem concerning graph eigenvalues is characterizing graphs with some distinct distance eigenvalues.
This problem can be considered into two types. One is determining graphs with some prescribed distance eigenvalues. The other is considering graphs with some distinst distance eigenvalues but without given values.

For the former,
Lu, Huang and Huang \cite{LHH} determined all graphs whose distance matrices have exactly two eigenvalues (counting multiplicity) different from $ -1 $ and $ -3 $ and some other related results see \cite{HHL,zhang}. Besides, the results in \cite{LHL} and \cite{LL} stated that the multiplicities of distance eigenvalues $ -1 $ and $-2$ in a threshold graph (a graph contains no induced $ C_4, P_4 $ or $ 2K_2 $) and a cograph (a graph contains no induced $ P_4 $), respectively. In general connected graphs, Li and Meng \cite{LM} characterized the graphs with certain multiplicity of the distance eigenvalue $ -2 $.
We focus on the following problem posed in \cite{Koolen}.
\begin{prob}[Problem 6.2 in \cite{Koolen}]\label{p1}
	Determine the connected graphs with three distinct distance eigenvalues
	$\lambda_1(D(G))>\lambda_2(D(G))>\lambda_3(D(G))$ such that $ \lambda_3(D(G))=-3 $.
\end{prob}
Let $ \mathcal{F} $ be a family of graphs. We say a graph $ G $ is $ \mathcal{F} $-free if it does not contain any $F\in \mathcal{F} $ as a subgraph.
 In this paper, we consider Problem \ref{p1} in $\{C_3,C_4\}$-free connected graphs.
\begin{thm}\label{t3}
	Let $ G $ be a $\{C_3,C_4\}$-free connected graph. Then $ G $ contains three distinct distance eigenvalues
	$\lambda_1(D(G))>\lambda_2(D(G))>\lambda_3(D(G))$ such that $ \lambda_3(D(G))=-3 $ if and only if $ G $ is the Petersen graph.
\end{thm}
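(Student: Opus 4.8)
The plan is to prove both directions, with the forward (``only if'') direction carrying all the difficulty. For the ``if'' direction, the Petersen graph $P$ has diameter $2$, so $D(P)=2(J-I)-A(P)=2J-2I-A(P)$; since $P$ is $3$-regular, $J$ and $A(P)$ are simultaneously diagonalizable, and the adjacency spectrum $\{3,1^{5},(-2)^{4}\}$ gives the eigenvalue $2\cdot 10-2-3=15$ from the all-ones vector and $-2-\theta$ on $\mathbf{1}^{\perp}$, so $Spec(D(P))=\{15,0^{4},(-3)^{5}\}$: three distinct eigenvalues with least $-3$. For the converse, let $G$ be $\{C_{3},C_{4}\}$-free and connected with $\lambda_{1}>\lambda_{2}>\lambda_{3}=-3$, and write $A=A(G)$.

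First I would show $\mathrm{diam}(G)=2$. Since $-3$ is least, $D(G)+3I\succeq 0$; for vertices $u,v$ at distance $d_{u,v}$ the principal submatrix $\left(\begin{smallmatrix}3 & d_{u,v}\\ d_{u,v} & 3\end{smallmatrix}\right)$ is positive semidefinite, forcing $d_{u,v}\le 3$. If some pair had $d_{u,v}=3$ this submatrix would be singular, so in a Gram factorization $D(G)+3I=M^{\top}M$ the columns satisfy $\|M_{u}-M_{v}\|^{2}=3+3-2\cdot 3=0$, hence $d_{w,u}=d_{w,v}$ for all $w$; taking $w$ a neighbour of $u$ gives $d_{u,v}\le 2$, a contradiction. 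As $\mathrm{diam}(G)=1$ gives $K_{n}$ (containing $C_{3}$ for $n\ge 3$), we get $\mathrm{diam}(G)=2$.

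Next, being $\{C_{3},C_{4}\}$-free of diameter $2$, any two adjacent vertices have no common neighbour and any two non-adjacent vertices exactly one, so $A^{2}=\Delta+(J-I-A)$ with $\Delta=\mathrm{diag}(\deg v)$, and $D(G)=2J-2I-A$. The crux is then to prove $G$ is regular. Let $z>0$ be the Perron vector, $Dz=\lambda_{1}z$, and $Z=\mathbf{1}^{\top}z$; reading off row $v$ of $Dz=\lambda_{1}z$ in diameter $2$ gives $Az=2Z\mathbf{1}-(\lambda_{1}+2)z$, and applying $A$ again while eliminating $A^{2}z$ via the identity above yields entrywise
\[
z_{v}\bigl(\deg v-\lambda_{1}^{2}-3\lambda_{1}-3\bigr)=Z\bigl(2\deg v-2\lambda_{1}-3\bigr),
\]
so $z_{v}$ is a fixed (linear-fractional) function of $\deg v$. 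Since $(D-\lambda_{2}I)(D+3I)$ is a positive multiple of $zz^{\top}$ (rank one), its diagonal gives $c\,z_{v}^{2}=4(n-1)-3\lambda_{2}-3\deg v$; substituting the displayed expression makes this a cubic in $\deg v$, so $\deg v$ takes at most three values, and combining this with the tight adjacency constraints forced by girth $5$ and diameter $2$ forces the degree to be constant.

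Finally, a connected $k$-regular graph of girth $\ge 5$ and diameter $2$ meets the Moore bound $n=k^{2}+1$ and is strongly regular with parameters $(k^{2}+1,k,0,1)$, with adjacency eigenvalues $k$ and $\tfrac{-1\pm\sqrt{4k-3}}{2}$; hence the least distance eigenvalue is $-2-\tfrac{-1+\sqrt{4k-3}}{2}=\tfrac{-3-\sqrt{4k-3}}{2}$, and $\lambda_{3}=-3$ forces $\sqrt{4k-3}=3$, i.e.\ $k=3$, the Petersen graph. The main obstacle is the regularity step: irregular diameter-$2$, girth-$5$ graphs (the stars $K_{1,m}$ among them) genuinely exist and are ruled out \emph{only} by the hypothesis $\lambda_{3}=-3$, so the argument must exploit the eigenvalue $-3$ — through the rank-one factor $(D-\lambda_{2}I)(D+3I)$ and the Perron relation — to upgrade ``finitely many degree values'' to exact regularity.
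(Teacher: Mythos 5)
Your ``if'' direction is correct, and your Gram-factorization proof that $\mathrm{diam}(G)\le 2$ is a valid, self-contained substitute for the paper's appeal to the bound $\lambda_n(D(G))\le -d$ (with equality only for complete multipartite graphs). The endgame (regular $+$ girth $5$ $+$ diameter $2$ $\Rightarrow$ Moore graph, then $-3=-(2+\frac{-1+\sqrt{4k-3}}{2})$ forces $k=3$, i.e.\ the Petersen graph) also coincides with the paper's. The gap is the regularity step in between, and it is genuine: your Perron relation and the rank-one identity $c\,z_v^2=4(n-1)-3\lambda_2-3\deg v$ are correctly derived and do give ``at most three distinct degrees,'' but the sentence claiming that the girth-$5$/diameter-$2$ constraints then ``force the degree to be constant'' is an assertion with no argument behind it; you acknowledge yourself that this upgrade is the main obstacle, and you never supply it. As written, the proof does not close.

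Moreover, the belief driving your final paragraph --- that irregular diameter-$2$, girth-$5$ graphs exist and can only be eliminated spectrally --- is false, and it is what pushed you onto the hard road. By a classical theorem of Singleton, every graph of diameter $d$ and girth $2d+1$ is regular; this is precisely why the paper may declare ``$G$ is a Moore graph'' the moment it knows the girth is $5$ and the diameter is $2$, with no spectral input at all. The stars $K_{1,m}$ you cite are not counterexamples: they are acyclic, so they do not have girth $5$; they are exactly the $\{C_3,C_4\}$-free diameter-$2$ graphs containing no cycle, and they are excluded because $Spec(D(K_{1,m}))$ has least eigenvalue $-2\neq-3$ (a degenerate case the paper itself passes over silently when it calls the girth ``obviously'' at most $5$). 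So the clean completion of your argument is combinatorial, not spectral: if $G$ contains a cycle, then girth $=5$ and Singleton's theorem yields regularity, after which your Moore-graph computation finishes; if $G$ is acyclic, it is a star, and its spectrum rules it out. Without one of these (or an actual proof of your claimed upgrade from three degree values to one), the regularity step --- and hence the theorem --- remains unproven.
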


For the latter,
Indulal \cite{83} showed that the complete graph is the
only graph that contains exactly two distinct distance eigenvalues.
Lin, Hong, Wang and Shu \cite{lhws} found two types of graphs with exactly three distinct distance eigenvalues.
Besides,
Koolen, Hayat and Iqbal \cite{Koolen} showed that the graphs with exactly three distinct distance eigenvalues fall into four classes. Based on these four classes \cite{Koolen}, we determine all trees with three distinct distance eigenvalues as an application of Theorem \ref{dett} and Theorem \ref{mer} in this paper.

\begin{thm}\label{3tree}
Any tree contains exactly three distinct distance eigenvalues if and only if it is a star.
	\end{thm}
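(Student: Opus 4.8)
The plan is to prove the two directions separately, using Theorem~\ref{mer} (Merris) as the main engine for the forward implication and a direct computation for the reverse one.

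For the reverse direction, suppose $T=K_{1,n-1}$ is a star with $n\ge 3$. First I would exhibit the eigenvalue $-2$: for any two leaves $\ell_1,\ell_2$ (all sharing the centre), the vector $e_{\ell_1}-e_{\ell_2}$ satisfies $D(e_{\ell_1}-e_{\ell_2})=-2(e_{\ell_1}-e_{\ell_2})$, because the two leaves have identical distances to every other vertex and are at distance $2$ from each other; this produces $-2$ with multiplicity $n-2$. The remaining two eigenvalues come from the equitable partition $\{\text{centre}\}\cup\{\text{leaves}\}$ with quotient matrix $\left(\begin{smallmatrix}0 & n-1\\ 1 & 2(n-2)\end{smallmatrix}\right)$, whose characteristic polynomial $x^2-2(n-2)x-(n-1)$ has one positive and one negative root, the latter different from $-2$ for $n\ge3$. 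Hence a star on at least three vertices has exactly the three distinct distance eigenvalues $(n-2)\pm\sqrt{(n-2)^2+(n-1)}$ and $-2$ (note that $K_2$ is excluded, having only the two eigenvalues $\pm1$).

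For the forward direction, let $T$ have exactly three distinct distance eigenvalues $\theta_1>\theta_2>\theta_3$ with multiplicities $1,m_2,m_3$ (the top one is simple by Perron--Frobenius, since $D(T)$ is nonnegative and irreducible). The first key step is Theorem~\ref{mer}: the chain $0>-2/\mu_1>\lambda_2(D)$ forces $\lambda_2(D)<0$, so $D(T)$ has exactly one positive eigenvalue and therefore $\theta_2,\theta_3<0$ with $m_2+m_3=n-1$. The second, and decisive, step is to read the same interlacing chain inside each block of equal distance eigenvalues: whenever two consecutive entries $\lambda_i,\lambda_{i+1}$ coincide, the squeezed value $-2/\mu_i$ must equal them. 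This pins $-2/\theta_2$ as a Laplacian eigenvalue of multiplicity at least $m_2-1$ and $-2/\theta_3$ of multiplicity at least $m_3-1$, so that all but at most three of the Laplacian eigenvalues (namely $\mu_1$, one ``transition'' value, and $0$) are forced onto one of two values. Consequently $T$ has at most five distinct Laplacian eigenvalues.

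Since a connected graph with diameter $d$ has at least $d+1$ distinct Laplacian eigenvalues, this already gives $\mathrm{diam}(T)\le 4$. The heart of the proof, and the step I expect to be the main obstacle, is to push this down to $\mathrm{diam}(T)\le 2$, i.e.\ to show the Laplacian in fact has exactly three distinct eigenvalues (which for a tree on $n\ge3$ vertices is equivalent to being a star). I would do this by combining the rigid multiplicity pattern above with two global identities: the trace condition $\theta_1+m_2\theta_2+m_3\theta_3=0$ and the Graham--Pollack determinant of Theorem~\ref{dett}, $\theta_1\theta_2^{m_2}\theta_3^{m_3}=(-1)^{n-1}(n-1)2^{n-2}$, together with the spanning-tree identity $\prod_{i<n}\mu_i=n$ applied to the forced Laplacian spectrum. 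For the surviving diameters $3$ and $4$ one may also take the isometric path $P_{d+1}\subseteq T$: its distance matrix is a principal submatrix of $D(T)$, so by Cauchy interlacing its (explicit) eigenvalues lie in $[\theta_3,\theta_2]$ except for a single one, which over-constrains $\theta_2,\theta_3$ and contradicts the trace/determinant identities unless $m_2=1$ and $T$ is the star. Verifying that these numerical constraints are jointly inconsistent for every non-star tree of diameter $3$ or $4$ is the technical crux; alternatively, one can invoke the four-class description of graphs with three distinct distance eigenvalues from \cite{Koolen} and check directly that none of its classes contains a tree other than a star.
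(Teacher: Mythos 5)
Your reverse direction is correct (it is essentially the paper's computation for the star), and your reduction of the forward direction to $\mathrm{diam}(T)\le 4$ --- reading Merris's chain in Theorem~\ref{mer} inside each block of equal distance eigenvalues to force at most five distinct Laplacian eigenvalues, then applying Lemma~\ref{dd1} --- is exactly the paper's first step. The gap is everything after that: you explicitly defer ``the technical crux'' (eliminating non-star trees of diameter $3$ and $4$), and the toolkit you name for it provably does not suffice as described. Concretely, take $n=5$ and the unique diameter-$3$ tree $S_{2,1}$, and consider the hypothetical spectrum $\{8,[\sqrt2-2]^2,[-2-\sqrt2]^2\}$. It has trace $0$; it satisfies Graham--Pollack, since $8(\sqrt2-2)^2(2+\sqrt2)^2=8\cdot(6-4\sqrt2)(6+4\sqrt2)=32=(-1)^{4}\cdot4\cdot2^{3}$; and it meets every Cauchy interlacing constraint coming from the principal submatrix $D(P_4)$, several with equality, because $Spec(D(P_4))=\{2+\sqrt{10},\sqrt2-2,2-\sqrt{10},-2-\sqrt2\}$. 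It even matches the paper's class-(iii) formulas (\ref{77}) with $q=2$, $c=2^q$. So the trace identity, the determinant identity and path-interlacing are jointly consistent with a non-star candidate and cannot ``over-constrain $\theta_2,\theta_3$'' on their own. What actually kills $S_{2,1}$ (and every other candidate) in the paper is structural input you never invoke: Collins's Lemma~\ref{-2}, which forces $-2$ to be a distance eigenvalue of any tree with more pendent vertices than pendent neighbours, played against the fact that $-2\notin Spec(D(T))$.

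That fact itself needs a step you skip: the paper first rules out classes (i), (ii), (iv) of Theorem~\ref{3e} (using Lemma~\ref{26} to get $\lambda_2(D(T))<0$), so a non-star $T$ must lie in class (iii), giving $n=2q+1$, $m_2=m_3=q$ and $\lambda_1(D(T))=cq$ with integer $c\ge3$; feeding this into Theorem~\ref{dett} and the trace yields the explicit values (\ref{66}) and (\ref{77}), whence $-2\notin Spec(D(T))$. Collins's lemma then reduces diameter $3$ to $P_4$ (contradiction: four distinct eigenvalues) and diameter $4$ to the spider $T_4^2$, which is finally eliminated by an equitable quotient-matrix computation (the paper's Forms 1--3). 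Your fallback --- ``invoke the four-class description and check directly'' --- is precisely this body of work, not an alternative to it. Your Laplacian idea is not hopeless: in the $n=5$ example above, the forced Laplacian spectrum $\{\mu_1,\,2+\sqrt2,\,\mu_3,\,2-\sqrt2,\,0\}$ together with $\sum_i\mu_i=2(n-1)$ and $\prod_{i<n}\mu_i=n$ gives $\mu_1=2+\sqrt{3/2}<2+\sqrt2$, violating Merris's strict inequality $-2/\mu_1>\lambda_2(D)$. But you neither set up nor verify this for a single case, let alone for all $q$, both value patterns, and both diameters, so as written the forward direction is incomplete.
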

The distance eigenvalues are closely linked to the structure of a graph. We consider the structure of a tree containing some specific value as its distance eigenvalue.
For all $ n $-vertex trees, it is known \cite{BRT,guo} that the number of the Laplacian eigenvalues distributed in $ [2, n] $ is at most $\lfloor\frac{n}{2}\rfloor$. We obtain an analog that the number of the distance eigenvalues distributed in $ [-1, 0) $ is at most $\lfloor\frac{n}{2}\rfloor$ by Theorem \ref{dett}. So it is  significant to characterize the trees with $ -1 $ as their distance eigenvalues.
In \cite{KT} and \cite{LHH}, authors showed that distance eigenvalue $ -1 $ can determine some special structure of a graph, respectively. However, these results cease to be effective in trees.

In this paper, we structure a series of trees with $-1$ as their distance eigenvalues, which also provide a sufficient condition for trees to have $-1$ as their distance eigenvalues.
Denote by $ \mathcal{T}(4n+2) $ the set of all trees with $ 4n+2 $ vertices obtained by adding an edge joining an endvertex of $ P_4 $ to a vertex of a tree in $ \mathcal{T}(4n-2) $ (see Fig. \ref{treeyou-1}). Note that the choices of the tree $ T\in \mathcal{T}(4n-2) $ and joinning vertex $ v\in T $ are arbitrary. Clearly, $ \mathcal{T}(2)=\{P_2\} $ and $ \mathcal{T}(6)=\{P_6\} $.

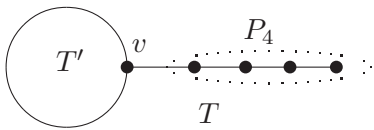
\begin{figure}[htp]
	\setlength{\unitlength}{1.2pt}
	\begin{center}
		\begin{picture}(113.8,26.1)
		\put(37.7,5.1){\circle*{4}}
		\qbezier(37.7,5.1)(37.7,-2.7)(32.2,-8.3)\qbezier(32.2,-8.3)(26.7,-13.8)(18.9,-13.8)\qbezier(18.9,-13.8)(11.0,-13.8)(5.5,-8.3)\qbezier(5.5,-8.3)(0.0,-2.7)(0.0,5.1)\qbezier(0.0,5.1)(-0.0,12.9)(5.5,18.4)\qbezier(5.5,18.4)(11.0,23.9)(18.8,23.9)\qbezier(18.8,23.9)(26.7,23.9)(32.2,18.4)\qbezier(32.2,18.4)(37.7,12.9)(37.7,5.1)
		\put(74.7,5.1){\circle*{4}}
		\put(58.7,5.1){\circle*{4}}
		\put(103.0,5.1){\circle*{4}}
		\qbezier(74.7,5.1)(88.5,5.1)(103.0,5.1)
		\put(88.5,5.1){\circle*{4}}
		\qbezier(58.7,5.1)(66.7,5.1)(74.7,5.1)
		\qbezier(58.7,5.1)(48.2,5.1)(37.7,5.1)
		\qbezier[2](113.8,5.1)(113.8,3.0)(104.5,1.5)\qbezier[6](104.5,1.5)(95.1,0.0)(81.9,0.0)\qbezier[6](81.9,0.0)(68.7,0.0)(59.4,1.5)\qbezier[2](59.4,1.5)(50.0,3.0)(50.0,5.1)\qbezier[2](50.0,5.1)(50.0,7.2)(59.4,8.7)\qbezier[6](59.4,8.7)(68.7,10.1)(81.9,10.2)\qbezier[6](81.9,10.2)(95.1,10.2)(104.5,8.7)\qbezier[2](104.5,8.7)(113.8,7.2)(113.8,5.1)\put(74.0,20.1){\makebox(0,0)[tl]{$P_4$}}
		\put(15.5,10.8){\makebox(0,0)[tl]{$T'$}}
		\put(39.2,13.9){\makebox(0,0)[tl]{$v$}}
		\put(60.0,-6.1){\makebox(0,0)[tl]{$T$}}
		\end{picture}
	\end{center}
	\caption{The tree $T\in \mathcal{T}(4n+2)$ where $T'\in \mathcal{T}(4n-2) $ and $v\in T'$.}\label{treeyou-1}
\end{figure}

\begin{thm}\label{-1}
	For any $n\in \mathbb{N}$ and tree $ T\in \mathcal{T}(4n+2) $, there always exists $ -1 $ as an eigenvalue of $ D(T) $.
\end{thm}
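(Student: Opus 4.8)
The plan is to prove, by induction on $n$, a statement slightly stronger than Theorem~\ref{-1}: for every $T\in\mathcal{T}(4n+2)$ the distance matrix $D(T)$ admits a $-1$-eigenvector $x$ whose entries sum to zero, i.e. $\mathbf{1}^{\top}x=0$, where $\mathbf{1}$ denotes the all-ones vector. Carrying the extra condition $\mathbf{1}^{\top}x=0$ through the induction is the crucial device: it is exactly what will permit the eigenvector for $T'\in\mathcal{T}(4n-2)$ to be extended to one for $T\in\mathcal{T}(4n+2)$. The base case is $\mathcal{T}(2)=\{P_2\}$, where $D(P_2)=\left(\begin{smallmatrix}0&1\\1&0\end{smallmatrix}\right)$ has the $-1$-eigenvector $(1,-1)^{\top}$, which indeed sums to zero.

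For the inductive step, write $T\in\mathcal{T}(4n+2)$ as the tree obtained from some $T'\in\mathcal{T}(4n-2)$ by attaching a pendant path $u_1u_2u_3u_4\cong P_4$, with $u_1$ joined to a vertex $v\in V(T')$. Listing the vertices of $T$ as those of $T'$ followed by $u_1,u_2,u_3,u_4$, and using that every shortest path from $u_k$ to a vertex $w\in V(T')$ passes through $v$, one has $d(u_k,w)=k+d(w,v)$ and $d(u_k,u_\ell)=|k-\ell|$. Consequently the distance matrix decomposes in block form as
\[
D(T)+I=\begin{pmatrix} D(T')+I & B\\ B^{\top} & D(P_4)+I\end{pmatrix},\qquad B=\mathbf{1}\,(1,2,3,4)+\delta_v\,\mathbf{1}_4^{\top},
\]
where $\delta_v$ is the column vector of distances from $v$ in $T'$ (equivalently the $v$-th column of $D(T')$) and $\mathbf{1}_4$ is the all-ones vector of length $4$.

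The heart of the argument is the vector $w_0=(1,-1,-1,1)^{\top}$, which satisfies the three one-line identities
\[
\mathbf{1}_4^{\top}w_0=0,\qquad (1,2,3,4)\,w_0=0,\qquad (D(P_4)+I)\,w_0=\mathbf{1}_4 .
\]
By the induction hypothesis $D(T')$ has a $-1$-eigenvector $y$ with $\mathbf{1}^{\top}y=0$; let $y_v$ be its entry at $v$. I propose the extension $x=(\,y\,;\,y_v w_0\,)$. The first two identities give $B\,w_0=0$, so the top block evaluates to $(D(T')+I)y=0$. For the bottom block, $B^{\top}y=(1,2,3,4)^{\top}(\mathbf{1}^{\top}y)+\mathbf{1}_4\,(D(T')y)_v=-y_v\mathbf{1}_4$ because $\mathbf{1}^{\top}y=0$ and $D(T')y=-y$, while $(D(P_4)+I)(y_vw_0)=y_v\mathbf{1}_4$ by the third identity; the two cancel. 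Hence $(D(T)+I)x=0$, and $x\neq 0$ since its $T'$-part $y$ is nonzero, so $-1$ is a distance eigenvalue of $T$. Finally $\mathbf{1}^{\top}x=\mathbf{1}^{\top}y+y_v\,\mathbf{1}_4^{\top}w_0=0$, so the strengthened hypothesis is restored and the induction closes.

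I expect the main obstacle to be locating the correct invariant to induct on together with the correct extension vector, rather than any deep structural fact. The guiding principle is that $w_0$ must be orthogonal to both $\mathbf{1}_4$ and the ``linear'' vector $(1,2,3,4)$ (so that it annihilates the coupling block $B$), while simultaneously being sent by $D(P_4)+I$ to a constant vector (so that it can absorb the constant term $-y_v\mathbf{1}_4$ inherited from $T'$). Once $w_0$ and the sum-zero condition are in hand, the verification is the routine bookkeeping of the three identities above.
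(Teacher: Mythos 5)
Your proposal is correct and follows essentially the same route as the paper's proof: the same induction on $n$ with the same strengthened invariant (a $-1$-eigenvector summing to zero), and the same extension vector $(y_v,-y_v,-y_v,y_v)$ on the appended $P_4$. The only difference is presentational — you package the verification through the rank-two factorization of the coupling block $B$ and the three identities for $w_0=(1,-1,-1,1)^{\top}$, whereas the paper checks the eigenvector equation row by row — but the underlying argument is identical.
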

While it should be emphasized that it is not the unique structure to characterize trees with distance eigenvalues $ -1 $. We find a tree whose distance spectrum contains $ -1 $ but does not belong to $ \mathcal{T}(4n+2) $. Let $ S_{a,b} $ be the \textit{double star} obtained by attaching $ a $ and $ b $ pendant vertices to the
two end vertices of $ K_2 $, respectively.

\begin{remark}\label{sp}
	Let $ T $ be a tree with diameter at most three. The value $ -1 $ is an eigenvalue of $ D(T) $ if and only if $ T \cong P_2$ or $ T \cong S_{2,2}$.
\end{remark}

\section{Preliminaries}
We introduce some useful tools in this section.

Hermitian matrices have real eigenvalues. The Cauchy interlace theorem states that the eigenvalues
of a Hermitian matrix $ A $ of order $ n $ are interlaced with those of any principal submatrix.
\begin{lemma}[Cauchy Interlace Theorem]\label{Cauchy}
	Let $ A $ be a Hermitian matrix of order $ n $, and let $ B $ be a principal
	submatrix of $ A $ of order $ m $. If $\lambda_1(A)\ge\lambda_2(A)\ge\cdots\ge\lambda_n(A) $ lists the eigenvalues of $ A $ and $\mu_1(B)\ge \mu_2(B)\ge \cdots \ge \mu_m(B)$ the eigenvalues of $ B $, then
	$$ \lambda_{n-m+i}(A) \le \mu_i(B) \le \lambda_i(A)  \mbox{\ \ \ for\ } i = 1, \ldots, m. $$
\end{lemma}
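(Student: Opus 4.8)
The plan is to derive both inequalities from the Courant--Fischer variational (min--max and max--min) characterization of the eigenvalues of a Hermitian matrix, which is the cleanest and most transparent route. Write $B$ for the principal submatrix of $A$ indexed by a set $I \subseteq \{1,\dots,n\}$ with $|I| = m$, and let $W \subseteq \mathbb{C}^n$ be the $m$-dimensional coordinate subspace of vectors that vanish outside $I$. For $y \in \mathbb{C}^m$ let $\hat{y} \in W$ denote its zero-padded extension; the key elementary observation is that this identification preserves the Rayleigh quotient, namely $\hat{y}^{*} A \hat{y} = y^{*} B y$ and $\hat{y}^{*}\hat{y} = y^{*} y$. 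Thus the Rayleigh quotient $R_A(x) = x^{*}Ax / x^{*}x$, restricted to $W$, coincides with the Rayleigh quotient $R_B(y)$ under this correspondence, and $i$-dimensional subspaces of $\mathbb{C}^m$ correspond bijectively to $i$-dimensional subspaces of $W$.

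For the upper bound $\mu_i(B) \le \lambda_i(A)$, I would use the max--min form $\mu_i(B) = \max_{\dim S = i,\, S \subseteq W} \min_{0 \ne x \in S} R_A(x)$, where the maximum ranges only over $i$-dimensional subspaces contained in $W$. Since enlarging the admissible family of subspaces can only increase a maximum, replacing the constraint $S \subseteq W$ by $S \subseteq \mathbb{C}^n$ yields $\mu_i(B) \le \max_{\dim S = i} \min_{0 \ne x \in S} R_A(x) = \lambda_i(A)$.

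For the lower bound $\lambda_{n-m+i}(A) \le \mu_i(B)$, I would instead use the min--max form $\mu_i(B) = \min_{\dim S = m-i+1,\, S \subseteq W} \max_{0 \ne x \in S} R_A(x)$. Enlarging the family of subspaces from those inside $W$ to all of $\mathbb{C}^n$ can only decrease a minimum, so $\mu_i(B) \ge \min_{\dim S = m-i+1} \max_{0 \ne x \in S} R_A(x)$, and by Courant--Fischer this last quantity equals $\lambda_{n-(m-i+1)+1}(A) = \lambda_{n-m+i}(A)$. Combining the two bounds establishes the claim for all $i = 1, \dots, m$.

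The argument carries essentially no computational difficulty, so the main point requiring care is purely bookkeeping: correctly matching the $(m-i+1)$-dimensional subspace count for $B$ against the eigenvalue index $n-m+i$ for $A$ in the min--max step, where the two opposite indexing conventions of Courant--Fischer must be reconciled. As an alternative, one could first treat the case $m = n-1$ directly, obtaining the tightly interlaced chain $\lambda_1(A) \ge \mu_1 \ge \lambda_2(A) \ge \cdots \ge \mu_{n-1} \ge \lambda_n(A)$, and then iterate by deleting one index at a time; this reduces everything to the single-deletion case but requires an extra induction, so I prefer the direct variational argument above.
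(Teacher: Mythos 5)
Your proposal is correct, but note that the paper itself offers no proof to compare against: Lemma \ref{Cauchy} is quoted there as the classical Cauchy interlace theorem for Hermitian matrices and is used as a black box (e.g.\ in Remark 3, to bound $\lambda_2(D(G))$ via the principal submatrices $B_{F_1}$, $B_{F_2}$). Your Courant--Fischer argument is the standard and complete proof of it. The two key points are both handled soundly: the zero-padding identification $\hat{y}^{*}A\hat{y}=y^{*}By$, $\hat{y}^{*}\hat{y}=y^{*}y$, which makes the Rayleigh quotient of $B$ the restriction of that of $A$ to the coordinate subspace $W$, and the index bookkeeping in the min--max step, where you correctly match subspace dimension $m-i+1$ for $B$ against eigenvalue index $n-(m-i+1)+1=n-m+i$ for $A$ (the one place such arguments typically go wrong). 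The monotonicity reasoning --- enlarging the family of admissible subspaces from those inside $W$ to all of $\mathbb{C}^n$ can only raise a maximum and lower a minimum --- is exactly what turns the two variational characterizations into the two halves of the interlacing inequality. Your alternative route via the codimension-one case $m=n-1$ followed by iterated deletion also works and recovers the same bound, since deleting $n-m$ indices one at a time degrades the lower index by exactly $n-m$; it is merely longer, and your preference for the direct variational argument is well placed.
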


Now we give a technical lemma which will be used to calculate distance eigenvalues. Let $M$ be a real $n\times n$ matrix, and let $\mathcal{N}=\{1,2,\ldots,n\}$. Given a partition $\Pi:\mathcal{N}=\mathcal{N}_1\cup \mathcal{N}_2\cup \cdots \cup \mathcal{N}_k$,  the matrix $M$ can be correspondingly partitioned as
$$
M=\left(\begin{array}{ccccccc}
M_{1,1}&M_{1,2}&\cdots &M_{1,k}\\
M_{2,1}&M_{2,2}&\cdots &M_{2,k}\\
\vdots& \vdots& \ddots& \vdots\\
M_{k,1}&M_{k,2}&\cdots &M_{k,k}\\
\end{array}\right).
$$
The \textit{quotient matrix} of $M$ with respect to $\Pi$ is defined as the $k\times k$ matrix $B_\Pi=(b_{i,j})$ where $b_{i,j}$ is the  average value of all row sums of  $M_{i,j}$.
The partition $\Pi$ is called \textit{equitable} if each block $M_{i,j}$ of $M$ has constant row sum $b_{i,j}$.
Also, we say that the quotient matrix $B_\Pi$ is \textit{equitable} if $\Pi$ is an  equitable partition of $M$.

\begin{lemma}{\rm(\cite{BH,GR})}\label{lem-eq}
	Let $M$ be a real symmetric matrix, and let $B_\Pi$ be an equitable quotient matrix of $M$. Then the eigenvalues of  $B_\Pi$ are also eigenvalues of $M$. Furthermore, if $M$ is nonnegative and irreducible, then $$\lambda_1(M)=\lambda_1(B_\Pi).$$
\end{lemma}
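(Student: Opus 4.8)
The plan is to prove both assertions through the characteristic matrix of the partition together with the intertwining relation it induces. Let $S$ be the $n\times k$ \emph{characteristic matrix} of $\Pi$, whose $(p,j)$-entry is $1$ if the index $p$ lies in the cell $\mathcal{N}_j$ and $0$ otherwise. Since the cells are nonempty and pairwise disjoint, the columns of $S$ are nonzero with pairwise disjoint supports, so $S$ has full column rank $k$. The first step is to verify the key identity $MS=SB_\Pi$. Reading off the $(p,j)$-entry, $(MS)_{p,j}$ is the sum of the entries of $M$ in row $p$ over the columns indexed by $\mathcal{N}_j$; if $p\in\mathcal{N}_i$, this is exactly the $p$-th row sum of the block $M_{i,j}$, which equals $b_{i,j}$ by equitability, and this matches $(SB_\Pi)_{p,j}=b_{i,j}$.

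With the identity in hand, the first assertion is immediate. If $B_\Pi v=\mu v$ with $v\neq 0$, then $M(Sv)=SB_\Pi v=\mu(Sv)$, and $Sv\neq 0$ because $S$ is injective, so $\mu$ is an eigenvalue of $M$ with eigenvector $Sv$. Thus every eigenvalue of $B_\Pi$ is an eigenvalue of $M$; in particular all eigenvalues of $B_\Pi$ are real (as $M$ is symmetric), so $\lambda_1(B_\Pi)$ is well defined and, being among the eigenvalues of $M$, satisfies $\lambda_1(B_\Pi)\le\lambda_1(M)$.

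For the \emph{furthermore} part I would exploit symmetry. The identity $MS=SB_\Pi$ shows the column space $\mathcal{U}=\mathrm{col}(S)$ is $M$-invariant; since $M$ is symmetric, the orthogonal complement $\mathcal{U}^\perp$ is $M$-invariant as well. Now I invoke Perron--Frobenius: because $M$ is nonnegative and irreducible, $\lambda_1(M)$ is a simple eigenvalue with a strictly positive eigenvector $x$. Decomposing $x=u+w$ with $u\in\mathcal{U}$ and $w\in\mathcal{U}^\perp$ respects the two invariant subspaces, so $Mu=\lambda_1(M)u$ and $Mw=\lambda_1(M)w$; by simplicity of $\lambda_1(M)$ the eigenvector $x$ must lie entirely in one of the two pieces. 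It cannot lie in $\mathcal{U}^\perp$, since the all-ones vector equals $S\mathbf{1}_k\in\mathcal{U}$, so every vector of $\mathcal{U}^\perp$ is orthogonal to $\mathbf{1}$ and has zero coordinate sum, impossible for the positive vector $x$. Hence $x\in\mathcal{U}$, say $x=Sv$; then $SB_\Pi v=MSv=\lambda_1(M)Sv=S\bigl(\lambda_1(M)v\bigr)$, and injectivity of $S$ yields $B_\Pi v=\lambda_1(M)v$. So $\lambda_1(M)$ is an eigenvalue of $B_\Pi$, giving $\lambda_1(M)\le\lambda_1(B_\Pi)$; combined with the reverse inequality above, this proves $\lambda_1(M)=\lambda_1(B_\Pi)$.

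The main obstacle is precisely this last step, namely arguing that the Perron eigenvector is constant on the cells of $\Pi$ (equivalently, lies in $\mathcal{U}$). This is where all three hypotheses are used simultaneously: symmetry to split the space into two $M$-invariant pieces, nonnegativity and irreducibility to guarantee via Perron--Frobenius that $\lambda_1(M)$ is simple with a positive eigenvector, and the elementary but essential observation $\mathbf{1}\in\mathcal{U}$ to rule out the eigenvector falling into $\mathcal{U}^\perp$. Everything else reduces to the single matrix identity $MS=SB_\Pi$ and the injectivity of $S$.
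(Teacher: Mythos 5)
The paper gives no proof of this lemma at all --- it is imported verbatim from Brouwer--Haemers \cite{BH} and Godsil--Royle \cite{GR} --- so your attempt can only be measured against the standard textbook argument. Your proof is correct and complete. The first half (the characteristic matrix $S$, the identity $MS=SB_\Pi$ from equitability, and pushing eigenvectors forward via $v\mapsto Sv$ using injectivity of $S$) is exactly the classical proof that the spectrum of $B_\Pi$ is contained in that of $M$. For the \emph{furthermore} part you take a genuinely different route from the usual one: the textbook proof notes that $B_\Pi$ is itself nonnegative, takes a nonnegative Perron eigenvector $v$ of $B_\Pi$ for $\lambda_1(B_\Pi)$, observes that $Sv\geq 0$ is then a nonnegative eigenvector of $M$, and invokes the Perron--Frobenius fact that an irreducible nonnegative matrix has a nonnegative eigenvector only for its spectral radius, giving $\lambda_1(B_\Pi)=\rho(M)=\lambda_1(M)$ in one stroke and without ever using symmetry of $M$. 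You instead exploit symmetry to split $\mathbb{R}^n$ into the $M$-invariant orthogonal pair $\mathcal{U}=\mathrm{col}(S)$ and $\mathcal{U}^\perp$, use simplicity of the Perron root to force the positive eigenvector entirely into one piece, and exclude $\mathcal{U}^\perp$ via $\mathbf{1}=S\mathbf{1}_k\in\mathcal{U}$. Your version is longer and needs the symmetry hypothesis, but in exchange it proves the stronger structural fact that the Perron eigenvector of $M$ is constant on the cells of $\Pi$, which the short proof does not yield directly. The only step you pass over silently is the identification $\lambda_1(M)=\rho(M)$, which is needed before Perron--Frobenius can be applied to $\lambda_1(M)$; for a symmetric (hence real-spectrum) nonnegative irreducible matrix this is immediate, since $\rho(M)$ is itself an eigenvalue and dominates every eigenvalue in modulus, but it deserves a sentence.
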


\section{Graphs with three distinct distance eigenvalues}
Koolen, Hayat and Iqbal \cite{Koolen} demonstrated that the graphs with exactly three distinct distance eigenvalues fall into four classes.
\begin{thm}[\cite{Koolen}]\label{3e}
	Let $ G $ be an $ n $-vertex connected graph with exactly three distinct distance
	eigenvalues $ \lambda_1(D(G)) > \lambda_2(D(G)) > \lambda_3(D(G)) $, with respective multiplicities $ m_1 = 1, m_2, m_3 $. Then one of the following holds.
	
	\noindent$ (i) $ $ G $ is complete bipartite;
	
	\noindent$ (ii) $ $ G $ is regular complete multipartite;	
	
	\noindent$ (iii) $ $ n $ is odd and $ \lambda_1(D(G)) = c(\frac{n-1}{2}) $ with $ 3 \le c \in \mathbb{Z}, $ and $ m_2 = m_3\ge 2 $;	
	
	\noindent$ (iv) $ $ \lambda_1(D(G)), \lambda_2(D(G)), \lambda_3(D(G)) \in \mathbb{Z}$, $ \lambda_2(D(G))\ge 0 $ and $ \lambda_3(D(G)) \le -3 $.
\end{thm}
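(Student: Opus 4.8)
The plan is to classify all trees of diameter at most three and then test each family for the eigenvalue $-1$ by combining the equitable quotient matrix of Lemma \ref{lem-eq} with the ``internal'' eigenvectors supported on the pendant classes. A tree of diameter at most three is exactly one of: the single vertex $P_1$; the path $P_2$ (diameter one); a star $K_{1,n}$ with $n\ge 2$ (the trees of diameter exactly two); or a double star $S_{a,b}$ with $a,b\ge 1$ (the trees of diameter exactly three). Since $D(P_1)=(0)$ has spectrum $\{0\}$ and $D(P_2)$ has spectrum $\{1,-1\}$, the vertex $P_1$ contributes nothing while $P_2$ already supplies one of the two asserted trees; it remains to handle stars and double stars.

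For the star $K_{1,n}$ I would use the partition of the vertex set into the central vertex and the class of $n$ leaves. Every vector supported on the leaves and summing to zero is an eigenvector for the eigenvalue $-2$, accounting for multiplicity $n-1$, and the remaining two eigenvalues are those of the equitable quotient $\left(\begin{smallmatrix}0&n\\1&2(n-1)\end{smallmatrix}\right)$, whose characteristic polynomial is $\lambda^2-2(n-1)\lambda-n$. Evaluating this polynomial at $\lambda=-1$ gives $n-1\ne 0$ for $n\ge 2$, and since $-1\ne -2$, the value $-1$ is not a distance eigenvalue of any star $K_{1,n}$ with $n\ge 2$.

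For the double star $S_{a,b}$, with central vertices $u,w$ and pendant classes $A,B$ of sizes $a,b$, I would take the equitable partition $\{u\},\{w\},A,B$, which produces a $4\times 4$ quotient matrix $B_\Pi$ whose entries are the average row sums of the corresponding distance blocks. As before, the vectors supported on $A$ (respectively $B$) that sum to zero are eigenvectors for $-2$, contributing multiplicity $a+b-2$; together with the four eigenvalues of $B_\Pi$ these exhaust the $(a+b+2)$-dimensional spectrum of $D(S_{a,b})$. Because $-1\ne -2$, the value $-1$ is a distance eigenvalue if and only if it is an eigenvalue of $B_\Pi$, i.e.\ if and only if $\det(B_\Pi+I)=0$. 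The crux is this determinant, which after row reduction collapses to
$$\det(B_\Pi+I)=a+b-ab=1-(a-1)(b-1).$$
For positive integers this vanishes precisely when $(a-1)(b-1)=1$, that is when $a=b=2$, giving $T\cong S_{2,2}$. Assembling the three families then yields the stated characterization. I expect the only real difficulty to be bookkeeping rather than conceptual: correctly computing the entries of $B_\Pi$ and evaluating the $4\times 4$ determinant, since the equitable-partition reduction cleanly isolates $B_\Pi$ as the only place the eigenvalue $-1$ could occur.
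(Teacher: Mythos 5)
Your proposal does not prove the statement in question. The statement is Theorem~\ref{3e}, the Koolen--Hayat--Iqbal classification of \emph{all} $n$-vertex connected graphs with exactly three distinct distance eigenvalues into the four classes (i)--(iv); the paper itself imports this result from \cite{Koolen} without proof. What you have written is instead a proof of a different statement in the paper, namely Remark~\ref{sp}: the characterization of trees of diameter at most three whose distance spectrum contains $-1$ (only $P_2$ and $S_{2,2}$). Nothing in your argument engages with any conclusion of Theorem~\ref{3e}: there is no analysis of the multiplicities $m_1=1,m_2,m_3$, no integrality argument for $\lambda_1(D(G)),\lambda_2(D(G)),\lambda_3(D(G))$, no appearance of complete bipartite or regular complete multipartite graphs, and no route from your setting to the bound $\lambda_3(D(G))\le -3$. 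The mismatch in scope alone is fatal: you restrict to trees of diameter at most three, whereas the theorem concerns arbitrary connected graphs, and by Theorem~\ref{3tree} the only tree with three distinct distance eigenvalues is a star, so a tree-based case analysis can never reach conclusions (ii)--(iv). The eigenvalue $-1$, which is the organizing object of your whole computation, plays no role in Theorem~\ref{3e} at all. (As a proof of Remark~\ref{sp} your outline is essentially the paper's own: the paper likewise uses the equitable quotient of Lemma~\ref{lem-eq} together with Lemma~\ref{-2} to isolate the $-2$-eigenspace on the pendant classes, and its quotient characteristic polynomial satisfies $f(-1)=-st+1$ with $a=s+1$, $b=t+1$, matching your determinant $1-(a-1)(b-1)$.)

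A genuine proof of Theorem~\ref{3e} runs along entirely different lines, roughly as in \cite{Koolen}: by Perron--Frobenius, $\lambda_1(D(G))$ is simple, so $m_1=1$; since the characteristic polynomial of $D(G)$ has integer coefficients, either all three eigenvalues are integers, or $\lambda_2(D(G))$ and $\lambda_3(D(G))$ are conjugate quadratic irrationals, in which case $m_2=m_3=\frac{n-1}{2}$ (so $n$ is odd) and the zero-trace condition forces $\lambda_1(D(G))=c\left(\frac{n-1}{2}\right)$ for an integer $c$, leading to case (iii). In the integral case one shows $\lambda_2(D(G))\ge 0$ (otherwise the graph would be complete, which has only two distinct distance eigenvalues), and the small values $\lambda_3(D(G))\in\{-1,-2\}$ are excluded or identified using structural results such as the characterization of graphs attaining $\lambda_n(D(G))=-d$ (Lemma~\ref{smallest}): the value $-2$ forces a complete multipartite graph, which with three distinct distance eigenvalues must be complete bipartite or regular complete multipartite, yielding (i) and (ii), and otherwise $\lambda_3(D(G))\le -3$, yielding (iv). None of these ideas -- simplicity of the Perron root, Galois-conjugacy of irrational eigenvalues, trace identities, and the $\lambda_n(D)\le -d$ rigidity -- appears in your proposal, so it cannot be repaired by bookkeeping; it is an answer to the wrong question.
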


Based on this useful result, we respectively characterize $ \{C_3,C_4\} $-free graphs with three distinct distance eigenvalues and $\lambda_3(D(G))=-3$ (see Problem \ref{p1}) and all trees with three distinct distance eigenvalues in this section. We firstly discuss Problem \ref{p1} in $ \{C_3,C_4\} $-free graphs.

\begin{lemma}[\cite{Koolen,lin}]\label{smallest}
	Let $ G $ be a connected graph. Then $ \lambda_n(D(G))\le -d $
	where $ d $ is the diameter of $ G $ and the equality holds if and only if $ G $ is a complete multipartite
	graph.
\end{lemma}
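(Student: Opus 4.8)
The plan is to read both the inequality and the equality case off the $2\times 2$ principal submatrix of $D(G)$ indexed by a diametral pair, combining the Cauchy Interlace Theorem (Lemma \ref{Cauchy}) with the Rayleigh-quotient characterisation of the smallest eigenvalue. First, for the inequality, I would fix vertices $u,v$ with $d_{u,v}=d$ and let $B=\left(\begin{smallmatrix}0&d\\ d&0\end{smallmatrix}\right)$ be the principal submatrix of $D(G)$ on $\{u,v\}$, whose eigenvalues are $\pm d$. Applying Lemma \ref{Cauchy} with $m=2$ and $i=2$ gives $\lambda_n(D(G))\le\mu_2(B)=-d$ at once, with no further work.

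The substance of the statement is the equality characterisation, which I expect to be the main obstacle. For the harder ``only if'' direction, suppose $\lambda_n(D(G))=-d$ and set $w=e_u-e_v$. A short computation gives $w^\top D(G)\,w = D_{uu}-2D_{uv}+D_{vv}=-2d=\lambda_n(D(G))\,\|w\|^2$, so $w$ attains the minimum of the Rayleigh quotient and is therefore an eigenvector of $D(G)$ for the eigenvalue $-d$. Writing the equation $D(G)w=-d\,w$ in the coordinate of any vertex $x\notin\{u,v\}$ yields $d_{x,u}-d_{x,v}=0$, i.e. \emph{every} other vertex is equidistant from $u$ and $v$. This equidistance condition is the engine of the argument: if $d\ge 2$, choosing the neighbour $w_1$ of $u$ on a shortest $u$--$v$ path gives $d_{w_1,u}=1$ and $d_{w_1,v}=d-1$, so equidistance forces $1=d-1$, whence $d=2$. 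Thus either $d=1$, in which case $G=K_n$ is complete multipartite, or $d=2$.

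To finish the $d=2$ case I would argue that $\overline{G}$ is a disjoint union of cliques, equivalently that $\overline{G}$ is $P_3$-free. If $\overline{G}$ contained an induced path $a$--$b$--$c$, then in $G$ the vertices $a,b$ and $b,c$ are at distance $2$ while $a,c$ are adjacent; applying the equidistance property to the diametral pair $(a,b)$ and the vertex $x=c$ forces $d_{c,a}=d_{c,b}$, contradicting $1\ne 2$. Hence $\overline{G}$ is a disjoint union of cliques and $G$ is complete multipartite. For the ``if'' direction, $G=K_n$ gives $D(G)=J-I$ and $\lambda_n(D(G))=-1=-d$ directly; for a proper complete multipartite graph ($d=2$) one has $D(G)=2J-2I-A$, so $D(G)+2I=2J-A=J+I+\overline{A}$, where $\overline{A}$ is the adjacency matrix of $\overline{G}$, a disjoint union of cliques with smallest eigenvalue $-1$. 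Then $I+\overline{A}\succeq 0$ and $J\succeq 0$ yield $D(G)+2I\succeq 0$, i.e. $\lambda_n(D(G))\ge -2$, which together with the interlacing bound gives equality.

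The routine pieces (the $2\times 2$ interlacing bound and the semidefinite check $D(G)+2I\succeq 0$) are immediate. The step I expect to require the most care is recognising that equality forces $e_u-e_v$ to be a bottom eigenvector and then extracting the equidistance identity $d_{x,u}=d_{x,v}$; once that identity is in hand, the two combinatorial consequences—that $d\le 2$ and that $\overline{G}$ has no induced $P_3$—follow in a few lines.
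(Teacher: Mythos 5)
Your proposal is correct, but there is nothing in the paper to compare it against: Lemma \ref{smallest} is quoted from \cite{Koolen,lin} and the paper gives no proof of it, using it only as an imported tool in the proof of Theorem \ref{t3}. Your argument is a valid, self-contained derivation. The inequality via the $2\times 2$ principal submatrix $\left(\begin{smallmatrix}0&d\\ d&0\end{smallmatrix}\right)$ and Lemma \ref{Cauchy} is exactly the standard route. The equality analysis is also sound: since the Rayleigh quotient of $e_u-e_v$ equals $-d=\lambda_n(D(G))$, the vector $e_u-e_v$ lies in the bottom eigenspace, and the coordinate equations give $d_{x,u}=d_{x,v}$ for all $x\notin\{u,v\}$; note (as you implicitly use) that this holds for \emph{every} diametral pair, which is what legitimizes applying equidistance to the pair $(a,b)$ coming from an induced $P_3$ in $\overline{G}$. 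The neighbour-on-a-geodesic trick correctly forces $d\le 2$, the $P_3$-free argument correctly shows $\overline{G}$ is a union of cliques, and the converse via $D(G)+2I=J+(I+\overline{A})\succeq 0$ is clean (with $K_n$ handled separately). One cosmetic remark: in the degenerate case $n=1$ the $2\times 2$ submatrix does not exist, but then $d=0$ and the statement is trivial, so nothing is lost. Compared with the cited sources, your proof packages both directions into two elementary ingredients (interlacing plus the Rayleigh-quotient equality case), which makes the lemma effectively free-standing within this paper's toolkit, since Lemma \ref{Cauchy} is already stated in Section 2.
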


A Moore graph is a regular graph of diameter $ d $ and girth $ 2d + 1 $. Consider a $ k $-regular Moore graph $ G $ of diameter $d=2 $. The unique graphs for $ d=2 $ exist for $ k = 2, 3,7 $ and possibly for $ k = 57 $ (which is undecided), but for no other degree.



\begin{lemma}{\rm(\cite{57,86})}\label{ad}
	Let $ G $ be a $ k $-regular graph on $ n $ vertices with diameter at
	most $ 2 $ and adjacency spectrum $\lambda_1= k,\lambda_2,\lambda_3,\ldots,\lambda_n$. Then the distance spectrum of $ G $
	is $ 2n- 2 -k, -(2 + \lambda_2), -(2 + \lambda_3),\ldots , -(2 + \lambda_n) $.
\end{lemma}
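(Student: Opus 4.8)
The plan is to exploit the diameter constraint to write the distance matrix as an explicit polynomial in the adjacency matrix. Since $G$ has diameter at most $2$, every pair of distinct vertices is at distance exactly $1$ (if adjacent) or exactly $2$ (if non-adjacent), and the diagonal entries vanish. Writing $A$ for the adjacency matrix, $J$ for the all-ones matrix, and $I$ for the identity, I would therefore assert the identity
$$
D(G) = A + 2(J - I - A) = 2J - 2I - A,
$$
and verify it by checking the three entry types: the diagonal gives $0$, an adjacent pair gives $1 + 2(1-1) = 1$, and a non-adjacent off-diagonal pair gives $0 + 2(1-0) = 2$. This linear relation is the crux of the argument; everything afterward is eigenvector bookkeeping.

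Next I would use $k$-regularity to diagonalize simultaneously. Because $G$ is $k$-regular, the all-ones vector $\mathbf{1}$ satisfies $A\mathbf{1} = k\mathbf{1}$, and it is trivially an eigenvector of $J$ (with eigenvalue $n$) and of $I$. Since $A$ is real symmetric, the remaining eigenvectors $v_2, \ldots, v_n$, affording $\lambda_2, \ldots, \lambda_n$, may be chosen orthogonal to $\mathbf{1}$; for each such $v_i$ one has $J v_i = 0$. These $n$ vectors form an orthogonal basis, so it suffices to apply $D(G)$ to each.

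For the Perron direction,
$$
D(G)\mathbf{1} = 2J\mathbf{1} - 2I\mathbf{1} - A\mathbf{1} = 2n\mathbf{1} - 2\mathbf{1} - k\mathbf{1} = (2n - 2 - k)\mathbf{1},
$$
giving the eigenvalue $2n - 2 - k$. For each $i \geq 2$, using $J v_i = 0$ and $A v_i = \lambda_i v_i$,
$$
D(G) v_i = 2J v_i - 2I v_i - A v_i = -2 v_i - \lambda_i v_i = -(2 + \lambda_i) v_i,
$$
producing the eigenvalue $-(2 + \lambda_i)$. Collecting these yields the claimed distance spectrum $2n - 2 - k,\ -(2+\lambda_2),\ \ldots,\ -(2+\lambda_n)$.

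I do not anticipate a genuine obstacle here: the entire statement is a consequence of the single matrix identity $D(G) = 2J - 2I - A$, which is forced by the diameter-$\le 2$ hypothesis, together with the standard fact that a regular graph's adjacency matrix shares the eigenvector $\mathbf{1}$ with $J$. If anything deserves a careful word, it is only the justification that the non-Perron eigenvectors can be taken orthogonal to $\mathbf{1}$ (hence annihilated by $J$), which is immediate from the symmetry of $A$.
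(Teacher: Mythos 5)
Your proof is correct. Note that the paper does not prove this lemma at all --- it quotes it from the cited references --- and your argument (the identity $D(G)=2J-2I-A$ forced by diameter at most $2$, then simultaneous diagonalization using $A\mathbf{1}=k\mathbf{1}$ and $Jv_i=0$ for eigenvectors orthogonal to $\mathbf{1}$) is precisely the standard derivation found in those sources, so there is nothing to add; one could only remark that diameter at most $2$ makes $G$ connected, so $k$ is a simple eigenvalue and the non-Perron eigenvectors are automatically orthogonal to $\mathbf{1}$.
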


\begin{lemma}{\rm(\cite{HS})}\label{222}
	A Moore graph with diameter $ d= 2 $ and degree $ k $ contains exactly three distinct adjacency eigenvalues $ k, (-1
	+\sqrt{4k-3})/2, (-1
	-\sqrt{4k-3})/2 $.
\end{lemma}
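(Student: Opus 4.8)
The plan is to use the rigid local structure of a Moore graph of diameter $2$ to express $A^2$ as a polynomial in the adjacency matrix $A$ and the all-ones matrix $J$, and then to read off the spectrum directly. First I would record the two combinatorial consequences of the hypotheses (regular of degree $k$, diameter $2$, girth $5$): since the girth exceeds $3$ there are no triangles, so any two adjacent vertices have no common neighbour; and since there is no $4$-cycle while every pair of non-adjacent vertices lies at distance $2$, any two distinct non-adjacent vertices have exactly one common neighbour. As the $(i,j)$-entry of $A^2$ counts common neighbours of $v_i$ and $v_j$ (and equals $k$ when $i=j$), these two facts combine into the single matrix identity
$$A^2 = kI + (J - I - A),\qquad\text{equivalently}\qquad A^2 + A + (1-k)I = J.$$

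Next I would diagonalise with the help of this identity. Because $G$ is $k$-regular and connected, the all-ones vector $\mathbf{1}$ is an eigenvector with eigenvalue $k$, while every eigenvector $v$ belonging to a different eigenvalue is orthogonal to $\mathbf{1}$ and hence satisfies $Jv=0$. Applying the identity to such a $v$ yields $(\lambda^2+\lambda+1-k)v=0$, so every eigenvalue other than $k$ is a root of $\lambda^2+\lambda+(1-k)=0$, namely
$$\lambda=\frac{-1\pm\sqrt{4k-3}}{2}.$$
This already shows that $G$ has at most three distinct eigenvalues; moreover $\sqrt{4k-3}=2k+1$ is impossible for integer $k$, so $k$ differs from both roots, and $4k-3\neq0$ shows the two roots differ from each other.

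It then remains to verify that both roots genuinely occur. Here I would invoke the standard fact that a connected graph has exactly two distinct adjacency eigenvalues if and only if it is complete. Since a graph of diameter $2$ contains non-adjacent vertices, $G$ is not complete and therefore has at least three distinct eigenvalues; together with the at-most-three bound from the previous paragraph, $G$ has exactly three, which must be precisely $k$ and the two values $(-1\pm\sqrt{4k-3})/2$. The multiplicities could then be pinned down afterwards from $\operatorname{tr}(A)=0$ and $n=k^2+1$: writing $m_{+},m_{-}$ for the multiplicities of the two roots $\lambda_{+},\lambda_{-}$, one has $m_{+}+m_{-}=k^2$ and $k+m_{+}\lambda_{+}+m_{-}\lambda_{-}=0$.

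The main obstacle is really the first step, namely correctly extracting the two common-neighbour counts from the girth and diameter hypotheses, since the entire argument rests on the clean quadratic relation $A^2+A+(1-k)I=J$. Once that identity is established the spectrum drops out immediately, and the only remaining subtlety is the short observation that non-completeness forces both roots of the quadratic to be present rather than just one.
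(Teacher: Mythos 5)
Your proof is correct: the lemma is quoted in the paper without proof (it is cited from Hoffman and Singleton), and your argument --- deriving the identity $A^2+A+(1-k)I=J$ from the girth-$5$/diameter-$2$ counting of common neighbours, diagonalising against the all-ones eigenvector, and using non-completeness to force both quadratic roots to occur --- is precisely the classical Hoffman--Singleton argument. No gaps; the only cosmetic point is that $k\neq(-1-\sqrt{4k-3})/2$ deserves its own (trivial) word, since that root is negative while $k>0$.
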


{\flushleft \textbf{Proof of Theorem \ref{t3}.}}
	Let $ G $ be a $ \{C_3,C_4\} $-free connected graph with diameter $d$ and three distinct distance eigenvalues
	$\lambda_1(D(G))>\lambda_2(D(G))>\lambda_3(D(G))$ such that $ \lambda_3(D(G))=-3 $. By Lemma \ref{smallest}, we get $ -3=\lambda_3(D(G))\le -d $, which implies $d\le 3$. Note that $ d\neq 3 $ since
	the equality condition of Lemma \ref{smallest}. So we have the diameter $d\le 2$. If $d=1$, then the graph $ G $ is complete and $ Spec(D(G))=\{n-1,[-1]^{n-1}\}  $, a contradiction. Thus, the diameter of $ G $ is 2. Besides, it is obvious that the girth of $ G $ is at most 5, which yields that the girth of $ G $ is 5 since $ G $ is $ \{C_3,C_4\} $-free. So $ G $ is a Moore graph with diameter $ d=2 $. Referring to Lemmas \ref{ad} and \ref{222}, we have $-3=-(2+\frac{-1+\sqrt{4k-3}}{2})$, implying $k=3$. Thus, $ G $ is the Petersen graph.

	If $ G $ is the Petersen graph, then $Spec(D(G))=\{15, [0]^4, [-3]^5\}$. The proof is complete. \qed

\noindent{\bf Remark 2.} By Theorem \ref{3e}, if $ G $ contais three distinct distance eigenvalues
$\lambda_1(D(G))>\lambda_2(D(G))>\lambda_3(D(G))$ such that $ \lambda_3(D(G))=-3 $, then $\lambda_1(D(G))$ and $\lambda_2(D(G))$ are integral.

\noindent{\bf Remark 3.}
Consider $C_3 $-free connected graphs in Problem \ref{p1}.  Let $ G $ be a $C_3 $-free connected graph with three distinct distance eigenvalues
$\lambda_1(D(G))>\lambda_2(D(G))>\lambda_3(D(G))$ such that $ \lambda_3(D(G))=-3  $. Referring to the proof of Theorem \ref{t3}, we only need to discuss one additional case that the girth of $ G $ is 4. Since the graph $ G $ is connected and $C_3 $-free, then $ G $ contains $ F_1 $ or $ F_2 $ (see Fig. \ref{ff}) as an induced subgraph. Let $
B_{F_1}=\left(\begin{array}{ccccccc}
0&1&2&1&1\\
1&0&1&2&2\\
2&1&0&1&2\\
1&2&1&0&2\\
1&2&2&2&0\\
\end{array}\right)
$ and $
B_{F_2}=\left(\begin{array}{ccccccc}
0&1&2&1&2\\
1&0&1&2&1\\
2&1&0&1&2\\
1&2&1&0&1\\
2&1&2&1&0\\
\end{array}\right).
$ Note that $ B_{F_1} $ or $ B_{F_2} $ is a principle submatrix of $ D(G) $. Then $\lambda_2(D(G))\ge \lambda_2(B_{F_1})$ or $\lambda_2(D(G))\ge \lambda_2(B_{F_2})$ by Lemma \ref{Cauchy}. We obtain $  \lambda_2(B_{F_1})=0.0841 $ and $\lambda_2(B_{F_2})=0.3542 $ by a simple calculation. Hence, $\lambda_2(D(G))\ge 1$ as $\lambda_2(D(G))$ is an integer.
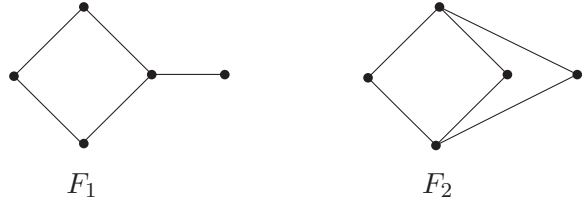
\begin{figure}[htp]
	\setlength{\unitlength}{0.9pt}
	\begin{center}
		\begin{picture}(234.2,69.6)
		\put(29.0,69.6){\circle*{4}}
		\put(0.0,40.6){\circle*{4}}
		\qbezier(29.0,69.6)(14.5,55.1)(0.0,40.6)
		\put(29,12.3){\circle*{4}}
		\qbezier(0.0,40.6)(14.1,26.5)(28.3,12.3)
		\put(57.3,41.3){\circle*{4}}
		\qbezier(29.0,69.6)(43.1,55.5)(57.3,41.3)
		\qbezier(57.3,41.3)(42.8,26.8)(28.3,12.3)
		\put(87.7,41.3){\circle*{4}}
		\qbezier(57.3,41.3)(72.5,41.3)(87.7,41.3)
		\put(176.9,69.6){\circle*{4}}
		\put(147.2,39.9){\circle*{4}}
		\qbezier(176.9,69.6)(162.0,54.7)(147.2,39.9)
		\put(175.5,11.6){\circle*{4}}
		\qbezier(147.2,39.9)(161.3,25.7)(175.5,11.6)
		\put(205.2,41.3){\circle*{4}}
		\qbezier(176.9,69.6)(191.0,55.5)(205.2,41.3)
		\qbezier(205.2,41.3)(190.3,26.5)(175.5,11.6)
		\put(234.2,41.3){\circle*{4}}
		\qbezier(176.9,69.6)(205.5,55.5)(234.2,41.3)
		\qbezier(234.2,41.3)(204.8,26.5)(175.5,11.6)
		\put(21.8,0.0){\makebox(0,0)[tl]{$F_1$}}
		\put(169.7,0.0){\makebox(0,0)[tl]{$F_2$}}
		\end{picture}
		\caption{The structure of induced subgraphs $F_1$ and $F_2$.}\label{ff}
	\end{center}
\end{figure}


It is the position to characterize all trees with three distinct distance eigenvalues. Before the proof, we show some useful lemmas.
The \textit{Laplacain matrix} of a graph $G$ is denoted by $L(G)=\Delta(G)-A(G)$, where $\Delta(G)=diag(d_1,\ldots,d_n)$, and $A(G)$ is the adjacency matrix of $G$.
In Theorem \ref{mer}, Merris obtained a relation between the distance eigenvalues and Laplacain eigenvalues of a tree. This relation plays an important role in the proof of Theorem \ref{3tree}.
\begin{lemma}[\cite{BH}]\label{dd1}
Let $G$ be a connected graph with diameter $ d $. Then $G$ has at least
$ d + 1 $ distinct Laplacain eigenvalues.
	\end{lemma}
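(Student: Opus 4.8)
The plan is to reduce the statement to a claim about linear independence of powers of $L(G)$ and then exploit the fact that the $(u,v)$ entry of $L(G)^m$ can only be nonzero once $m$ reaches the graph distance $d_{u,v}$. Since $L(G)$ is real symmetric, it is diagonalizable, so the number of its distinct eigenvalues equals the degree of its minimal polynomial. Hence it suffices to produce $d+1$ linearly independent matrices among the powers $I, L(G), L(G)^2, \ldots, L(G)^{d}$; this forces the minimal polynomial to have degree at least $d+1$, giving at least $d+1$ distinct Laplacain eigenvalues.

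To set up the witness, I would choose two vertices $u$ and $v$ with $d_{u,v}=d$, which exist because $d$ is the diameter. The crucial observation is that each off-diagonal entry $L(G)_{a,b}$ is nonzero only when $a$ and $b$ are adjacent, while each diagonal entry merely records a degree. Expanding $(L(G)^m)_{u,v}=\sum L(G)_{u,i_1}L(G)_{i_1,i_2}\cdots L(G)_{i_{m-1},v}$ as a sum over length-$m$ walks from $u$ to $v$, every step either stays at a vertex (a diagonal factor) or moves along an edge. Reaching $v$ from $u$ requires at least $d$ moving steps, so for $0\le m\le d-1$ no term survives and $(L(G)^m)_{u,v}=0$; in particular the $(u,v)$ entries of $I,L(G),\ldots,L(G)^{d-1}$ all vanish.

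The heart of the argument, and the step where one must be careful, is showing $(L(G)^{d})_{u,v}\neq 0$: here the only surviving walks are the shortest $u$--$v$ paths, which use exactly $d$ moving steps and no diagonal steps. Each such path contributes $\prod(-1)=(-1)^{d}$, since every off-diagonal factor equals $-1$, so the terms all carry the same sign and cannot cancel. As at least one shortest path exists, $(L(G)^{d})_{u,v}=(-1)^{d}N_{u,v}\neq 0$, where $N_{u,v}\ge 1$ counts the shortest $u$--$v$ paths. Consequently $L(G)^{d}$ has a nonzero entry exactly where all lower powers vanish, so it cannot be a linear combination of $I,L(G),\ldots,L(G)^{d-1}$. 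This yields the desired $d+1$ linearly independent powers and completes the proof.

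The main obstacle is precisely this no-cancellation phenomenon; it works here because the off-diagonal entries of $L(G)$ are all equal to $-1$, so the shortest-path contributions reinforce one another rather than interfere. The identical scheme, with $A(G)$ in place of $L(G)$, proves the well-known adjacency analogue. It is worth emphasizing that the argument does \emph{not} transfer to $D(G)$, whose off-diagonal entries are all nonzero, so no such diameter-based lower bound on the number of distinct distance eigenvalues is available.
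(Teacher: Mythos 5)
Your proof is correct, and there is nothing in the paper to compare it against: Lemma~\ref{dd1} is stated without proof, being quoted directly from Brouwer and Haemers \cite{BH}. Your argument is in fact the standard textbook proof of that cited result: reduce to the degree of the minimal polynomial (legitimate since $L(G)$ is real symmetric, hence diagonalizable, so the number of distinct eigenvalues equals that degree), and then use the support structure of $L(G)$ --- off-diagonal entries nonzero exactly on edges, and equal to $-1$ there --- to show that $(L(G)^m)_{u,v}=0$ for $m<d$ while $(L(G)^d)_{u,v}=(-1)^d N_{u,v}\neq 0$ for a diametral pair $u,v$. The no-cancellation point you flag is exactly the right thing to be careful about, and your sign argument settles it.

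One small step deserves tightening. What you literally establish is that $L(G)^d\notin\mathrm{span}\{I,L(G),\ldots,L(G)^{d-1}\}$, and you then assert this ``yields $d+1$ linearly independent powers.'' Non-membership of the top power alone does not formally give independence of the whole family. Two easy repairs: (a) note that if the minimal polynomial had degree $k\le d$, then every power $L(G)^m$ with $m\ge k$, in particular $L(G)^d$, would lie in $\mathrm{span}\{I,\ldots,L(G)^{k-1}\}\subseteq\mathrm{span}\{I,\ldots,L(G)^{d-1}\}$, contradicting your non-membership claim --- so the weaker statement already suffices; or (b) run your walk argument for every $m\le d$ using a pair of vertices at distance exactly $m$ (such pairs exist along any geodesic between diametral vertices), which shows each $L(G)^m$ escapes the span of the lower powers and hence gives full linear independence. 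Either patch is one line, so this is a cosmetic gap, not a substantive one. Your closing remark that the argument cannot transfer to $D(G)$, whose off-diagonal entries are all nonzero, is also apt --- indeed the whole point of the paper's Theorem~\ref{3tree} is that few distinct distance eigenvalues do \emph{not} force small diameter by any such mechanism.
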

An immediate consequence of Theorem \ref{dett} is as follows:
\begin{lemma}[\cite{GP}]\label{26}
	 Let $ T $ be a tree with n vertices where $ n \ge 2 $. Then $ D(T) $ has $ 1 $ positive and $ n-1 $ negative eigenvalues.
\end{lemma}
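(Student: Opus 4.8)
The plan is to prove the inertia statement by induction on $n$, using Cauchy interlacing (Lemma \ref{Cauchy}) to bound the number of positive eigenvalues from above and the Graham--Pollack determinant (Theorem \ref{dett}) to pin the count down exactly. The reason both ingredients are needed is that the determinant formula by itself only fixes the \emph{parity} of the number of positive eigenvalues, while interlacing by itself only caps that number; neither alone gives ``exactly one''.

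For the base case $n=2$ I would simply note that $D(P_2)$ has eigenvalues $1$ and $-1$, one positive and one negative. For the inductive step I would take a tree $T$ on $n\ge 3$ vertices, fix a leaf $v$, and set $T'=T-v$. The first thing to verify is that $D(T')$ is a principal submatrix of $D(T)$: because $v$ is a leaf, no shortest path between two vertices of $T'$ runs through $v$, so the distances among $V(T')$ agree in $T$ and in $T'$, and $D(T')$ is exactly $D(T)$ with the row and column of $v$ deleted. By the inductive hypothesis $D(T')$ has eigenvalues $\mu_1>0>\mu_2\ge\cdots\ge\mu_{n-1}$. Applying Lemma \ref{Cauchy} with $m=n-1$ yields $\lambda_{i+1}(D(T))\le\mu_i\le\lambda_i(D(T))$; in particular $\lambda_1(D(T))\ge\mu_1>0$ while $\lambda_3(D(T))\le\mu_2<0$. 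Hence every eigenvalue from $\lambda_3$ downward is negative and $\lambda_1$ is positive, so $D(T)$ has either one or two positive eigenvalues --- interlacing leaves only the sign of $\lambda_2$ open.

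To close this gap I would invoke Theorem \ref{dett}. Since $\det D(T)=(-1)^{n-1}(n-1)2^{n-2}\ne 0$, the matrix $D(T)$ has no zero eigenvalue; writing $p$ and $q=n-p$ for the numbers of positive and negative eigenvalues, the sign of the determinant equals $(-1)^q$, so $q\equiv n-1\pmod 2$ and therefore $p$ is odd. Combined with $p\in\{1,2\}$ from the interlacing step, this forces $p=1$ and $q=n-1$, completing the induction.

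The hard part here is conceptual rather than computational: neither tool suffices in isolation. The determinant formula only constrains the parity of the number of positive eigenvalues and cannot on its own exclude $3,5,\dots$ positives, while Cauchy interlacing only caps the positive count at two and genuinely cannot decide the sign of $\lambda_2$. The key point is that these two constraints are exactly complementary, so together they are decisive. I would also remark that Theorem \ref{mer} offers a shortcut bypassing the induction: it gives $\lambda_2(D(T))<-2/\mu_1<0$ outright, so $\lambda_1$ is the unique nonnegative eigenvalue, and the nonvanishing of the determinant from Theorem \ref{dett} then rules out a zero eigenvalue, yielding the same conclusion in one line.
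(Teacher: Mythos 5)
Your proof is correct, but note that the paper does not actually prove this lemma: it states it with a citation to Graham--Pollack and the single remark that it is ``an immediate consequence of Theorem \ref{dett}.'' Your argument is, in effect, the missing justification for that remark, and it is sound. Deleting a leaf leaves all other pairwise distances unchanged (paths in a tree are unique, and no simple path passes through a degree-one vertex except as an endpoint), so $D(T-v)$ genuinely is a principal submatrix of $D(T)$; Cauchy interlacing then caps the number of positive eigenvalues at two, and the nonvanishing determinant of sign $(-1)^{n-1}$ forces that number to be odd, closing the induction. Your diagnosis that neither ingredient suffices alone is fair --- the determinant of $D(T)$ by itself fixes only parity --- though the reading under which the paper's ``immediate'' is usually understood is to apply Theorem \ref{dett} to the whole nested chain of subtrees given by a leaf ordering and read off the inertia from the alternating signs of the leading principal minors, which is essentially your interlacing step in disguise (with a minor wrinkle at the $1\times 1$ minor, which vanishes). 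Your closing remark is actually the sharpest route available inside this paper: since Theorem \ref{mer} is stated up front, $\lambda_2(D(T)) < -2/\mu_1 < 0$ makes $\lambda_2,\ldots,\lambda_n$ all negative at once, and $\lambda_1>0$ then follows from the zero trace (or, as you say, from the nonzero determinant), so the whole lemma falls out in one line with no induction.
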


{\flushleft \textbf{Proof of Theorem \ref{3tree}.}}
	If $ T $ is a star with $ n $ vertices, then we have $Spec(D(T))=\{n-2+\sqrt{n^2-3n+3},n-2-\sqrt{n^2-3n+3},[-2]^{n-2}\}$ by a simple calculation and the condition is obviously sufficient.

Next suppose $ T$ is an $ n $-vertex tree with exactly three distinct distance
eigenvalues $ \lambda_1(D(T)) > \lambda_2(D(T)) > \lambda_3(D(T)) $, with respective multiplicities $ m_1 = 1, m_2, m_3 $. By Theorem \ref{3e}, $ T $ is a star if $ (i) $ holds. Thus, we assume that the tree $ T $ is not a star which implies that $ T $ is neither complete bipartite nor regular complete multipartite. Besides, we notice that $\lambda_1(D(T)) >0>\lambda_2(D(T)) > \lambda_3(D(T)) $ by Lemma \ref{26}. According to the above statements, we rule out the possibility of $ (i), (ii) $ and $ (iv) $ in Theorem \ref{3e}. So we only need to consider $ (iii) $ in Theorem \ref{3e} in the following argument.

Let $ n=2q+1 $ where $q\ge 2$ since $ T \cong P_3$ is a star when $q=1$. Then we have $ \lambda_1(D(T)) = cq $ with $ 3 \le c \in \mathbb{Z}$, and $ m_2 = m_3=q $. Note that $det(D(T))=\lambda_1(D(T))\lambda_2(D(T))^{m_2}\lambda_3(D(T))^{m_3}=(-1)^{n-1}(n-1)2^{n-2} $ by Theorem \ref{dett}. So we obtain
\begin{eqnarray*}
	(-1)^{2q}\cdot2q\cdot2^{2q-1}=cq\cdot\lambda_2(D(T))^q\cdot\lambda_3(D(T))^q,
\end{eqnarray*}
which implies
\begin{eqnarray}\label{qq1}
4^q=c(\lambda_2(D(T))\lambda_3(D(T)))^q.
\end{eqnarray}
It is clear that $4\mid\lambda_2(D(T))\lambda_3(D(T))$ and $ \lambda_2(D(T))\lambda_3(D(T)) $ equals to 1 or 2, otherwise $\lambda_2(D(T))\lambda_3(D(T))=4$ and $c=1$, contradicting to $ 3 \le c \in \mathbb{Z}$.
Since the trace of the distance matrix is zero, the sum of distance eigenvalues is zero, and hence we obtain
\begin{eqnarray}\label{qq2}
\lambda_1(D(T))+q\lambda_2(D(T))+q\lambda_3(D(T))=0.
\end{eqnarray}
If $ \lambda_2(D(T))\lambda_3(D(T))=1 $, then we get $c=4^q$, $\lambda_1(D(T))=q\cdot4^q$. Therefore, we can multiply (\ref{qq1}) and (\ref{qq2}), obtaining
\begin{eqnarray}\label{66}
 \lambda_2(D(T))=\frac{-4^q+\sqrt{16^q-4}}{2}\  \mbox{and}\  \lambda_3(D(T))=\frac{-4^q-\sqrt{16^q-4}}{2}.
\end{eqnarray}
If $ \lambda_2(D(T))\lambda_3(D(T))=2 $, then we have $c=2^q$, $\lambda_1(D(T))=q\cdot2^q$. Rearranging equations  (\ref{qq1}) and (\ref{qq2}), we get
\begin{eqnarray}\label{77}
\lambda_2(D(T))=\frac{-2^q+\sqrt{4^q-8}}{2}
\  \mbox{and}\
\lambda_3(D(T))=\frac{-2^q-\sqrt{4^q-8}}{2}
\end{eqnarray}
Armed with the values of $ \lambda_2(D(T)) $ and $ \lambda_3(D(T)) $, it is obvious that $-2\notin Spec(D(T))$.

By Theorem \ref{mer}, $ T$ has at most five distinct Laplacain eigenvalues and the diameter of $ T$ is at most four in view of Lemma \ref{dd1}. Hence, the analysis is partitioned in the following three cases according to the diameter $ d $ of the tree $T$. Firstly suppose $d\le 2$, then $T$ is a star, a contradiction. Next we consider $d= 3$. Note that $-2\notin Spec(D(T))$ and $ T\in  \mathbb{T}_3 =\{S_{s+1,t+1}|\ s\in \mathbb{N}\ \mbox{and}\ t\in \mathbb{N}\} $. Then we conclude that $ T \cong S_{1,1}\cong P_4$ referring to Lemma \ref{-2} and $Spec(D(P_4))=\{2 + \sqrt{10},  \sqrt{2}-2,2-\sqrt{10} ,-2- \sqrt{2}\}$, contradicting the hypothesis. Finally, we suppose $d=4$. All trees with diameter 4 are shown in Fig. \ref{tree4}.
Since $-2\notin Spec(D(T))$, together with Lemma \ref{-2}, implies that $ T $ is either $ T_4^1 $ ($ s=s'=0, p=k_1=\cdots=k_t=1 $) or $ T_4^2 $ ($ s=s'=p=0, k_1=\cdots=k_t=1 $). Considering the order of $ T $ is odd, so $ T\cong T_4^2 $ and from now on we shall calculate the distance spectrum of $ T_4^2$.
\begin{figure}[htp]
	\setlength{\unitlength}{0.9pt}
	\begin{center}
		\begin{picture}(253.0,155.9)
		\put(17.4,77.6){\circle*{4}}
		\put(132.0,77.6){\circle*{4}}
		\qbezier(17.4,77.6)(45.7,77.6)(132.0,77.6)
		\put(45.7,77.6){\circle*{4}}
		\put(248.7,77.6){\circle*{4}}
		\qbezier(132.0,77.6)(220.4,77.6)(248.7,77.6)
		\put(220.4,77.6){\circle*{4}}
		\put(19.6,88.5){\circle*{4}}
		\qbezier(45.7,77.6)(32.6,83.0)(19.6,88.5)
		\put(45.7,105.9){\circle*{4}}
		\qbezier(45.7,77.6)(45.7,91.7)(45.7,105.9)
		\put(245.8,87.7){\circle*{4}}
		\qbezier(220.4,77.6)(233.1,82.7)(245.8,87.7)
		\put(220.4,105.9){\circle*{4}}
		\qbezier(220.4,77.6)(220.4,91.7)(220.4,105.9)
		\put(118.2,139.2){\circle*{4}}
		\qbezier(132.0,77.6)(125.4,106.6)(118.2,139.2)
		\put(125.4,106.6){\circle*{4}}
		\put(148.6,139.2){\circle*{4}}
		\qbezier(132.0,77.6)(140.7,106.6)(148.6,139.2)
		\put(140.7,106.6){\circle*{4}}
		\put(116.0,38.4){\circle*{4}}
		\qbezier(132.0,77.6)(124.0,58.0)(116.0,38.4)
		\put(150.1,38.4){\circle*{4}}
		\qbezier(132.0,77.6)(141.0,58.0)(150.1,38.4)
		\put(129.8,122.5){\circle*{2}}
		\put(137.0,122.5){\circle*{2}}
		\put(133.4,122.5){\circle*{2}}
		\put(129.1,55.1){\circle*{2}}
		\put(136.3,55.1){\circle*{2}}
		\put(132.7,55.1){\circle*{2}}
		\put(227.7,95.0){\circle*{2}}
		\put(234.2,90.6){\circle*{2}}
		\put(231.3,92.8){\circle*{2}}
		\put(38.4,96.4){\circle*{2}}
		\put(33.4,91.4){\circle*{2}}
		\put(35.5,93.5){\circle*{2}}
		\put(115.4,10.0){\makebox(0,0)[tl]{the set $\mathbb{T}_4$}}
		\put(92.8,115.3){\circle*{4}}
		\qbezier(125.4,106.6)(109.1,110.9)(92.8,115.3)
		\put(172.6,115.3){\circle*{4}}
		\qbezier(140.7,106.6)(156.6,110.9)(172.6,115.3)
		\put(116.0,122.5){\circle*{2}}
		\put(110.2,116.7){\circle*{2}}
		\put(113.1,119.6){\circle*{2}}
		\put(151.5,124.0){\circle*{2}}
		\put(157.3,118.2){\circle*{2}}
		\put(154.4,121.1){\circle*{2}}
		\put(30.0,101.3){\makebox(0,0)[tl]{$s$}}
		\put(232.1,103.6){\makebox(0,0)[tl]{$s'$}}
		\put(130.4,135.8){\makebox(0,0)[tl]{$t$}}
		\put(100.3,134.1){\makebox(0,0)[tl]{${k_1}$}}
		\put(160.1,134.9){\makebox(0,0)[tl]{${k_t}$}}
		\put(132.1,45.5){\makebox(0,0)[tl]{$p$}}
		\end{picture}
		\caption{The trees with diameter 4.}\label{tree4}
	\end{center}
\end{figure}

\begin{figure}[htp]
	\setlength{\unitlength}{0.9pt}
	\begin{center}
		\begin{picture}(290.0,76.1)
		\put(0.0,46.4){\circle*{4}}
		\put(58.0,46.4){\circle*{4}}
		\qbezier(0.0,46.4)(29.0,46.4)(58.0,46.4)
		\put(29.0,46.4){\circle*{4}}
		\put(116.7,46.4){\circle*{4}}
		\qbezier(58.0,46.4)(87.0,46.4)(116.7,46.4)
		\put(87.0,46.4){\circle*{4}}
		\put(58.0,76.1){\circle*{4}}
		\qbezier(58.0,76.1)(58.0,61.3)(58.0,46.4)
		\put(29.0,3.6){\circle*{4}}
		\qbezier(58.0,46.4)(43.5,25.4)(29.0,3.6)
		\put(43.5,25.4){\circle*{4}}
		\put(88.5,2.9){\circle*{4}}
		\qbezier(58.0,46.4)(73.2,24.7)(88.5,2.9)
		\put(73.2,24.7){\circle*{4}}
		\put(55.1,13.1){\circle*{2}}
		\put(62.4,13.1){\circle*{2}}
		\put(58.7,13.1){\circle*{2}}
		\put(174.0,47.1){\circle*{4}}
		\put(232.0,47.1){\circle*{4}}
		\qbezier(174.0,47.1)(203.0,47.1)(232.0,47.1)
		\put(203.0,47.1){\circle*{4}}
		\put(290.0,47.1){\circle*{4}}
		\qbezier(232.0,47.1)(261.0,47.1)(290.0,47.1)
		\put(261.0,47.1){\circle*{4}}
		\put(204.5,4.4){\circle*{4}}
		\qbezier(232.0,47.1)(218.2,24.7)(204.5,4.4)
		\put(218.2,24.7){\circle*{4}}
		\put(261.0,4.4){\circle*{4}}
		\qbezier(232.0,47.1)(246.5,26.1)(261.0,4.4)
		\put(246.5,26.1){\circle*{4}}
		\put(229.1,13.1){\circle*{2}}
		\put(236.4,13.1){\circle*{2}}
		\put(232.7,13.1){\circle*{2}}
		\put(50.0,0.0){\makebox(0,0)[tl]{$T_4^1$}}
		\put(224.8,1.5){\makebox(0,0)[tl]{$T_4^2$}}
		\end{picture}
		\caption{The trees $T_4^2$ and $T_4^2$.}\label{tree42}
	\end{center}
\end{figure}
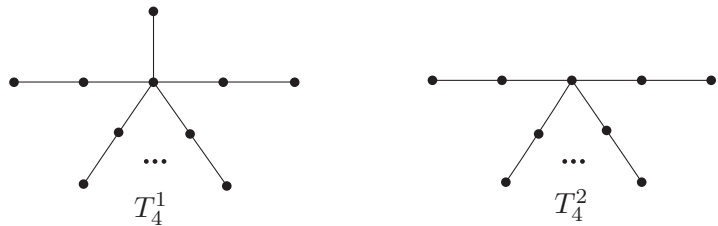
Let $V_i=\{v\in V( T_4^2)|\ deg(v)=i\}$. Clearly, $|V_1|=q, |V_2|=q , |V_q|=1$ and $ V( T_4^2) =V_1\cup V_2\cup V_q $ is an equitable partition of $ D(T_4^2) $. So the corresponding quotient matrix is
	
\begin{eqnarray*}
	B(T_4^2)=\left[
	\begin{array}{ccc}
		0 &q &2q \\ [2mm]
     	1 &2(q-1)&3(q-1)+1  \\ [2mm]
		2 &3(q-1)+1 &4(q-1)
	\end{array}
	\right],
\end{eqnarray*}
the characteristic polynomial of the matrix $ B(T_4^2) $ is
\begin{eqnarray*}
g(\lambda)=\lambda^3 - (6q - 6)\lambda^2 - (q^2 + 9q - 4)\lambda - 4q.
\end{eqnarray*}
Since the partition is equitable, then by Lemma \ref{lem-eq}, the characteristic polynomial $g(\lambda)$ can be expressed as one of the following three forms:

\noindent\textbf{Form 1.} $g(\lambda)=(\lambda-\lambda_1(D(T)))(\lambda-\lambda_2(D(T)))(\lambda-\lambda_3(D(T)))$.

We obtain that $\lambda_1(D(T))\lambda_2(D(T))\lambda_3(D(T))=4q$, which implies that $4q=q\cdot4^q$ or $4q=2q\cdot2^q$ when $q\ge 2$, a contradiction.

\noindent\textbf{Form 2.} $g(\lambda)=(\lambda-\lambda_1(D(T)))(\lambda-\lambda_2(D(T)))^2$.

Then we get that $ h_1(q)=\lambda_1(D(T))+2\lambda_2(D(T))=6q-6$, which yields that $\lambda_1(D(T))+2\lambda_2(D(T))-6q+6 =0$.
Substituting (\ref{66}) and $q\ge 2$, we have
\begin{eqnarray*}
h_1(q)&=&q\cdot 4^q-4^q+\sqrt{16^q-4}-6q+6\\
      &>&2\cdot 4^q-4^q-6q+6\\
      &=&4^q-6q+6\\
      &\ge& 4^2-6\cdot 2+6=10>0.
\end{eqnarray*}
This contradicts the equation $ \lambda_1(D(T))+2\lambda_2(D(T))-6q+6 =0$. Next, we consider (\ref{77}) and get $h_2(q)= q\cdot 2^q-2^q+\sqrt{4^q-8}-6q+6=0$. By a simple calculation, we get $ h_2(2)=2\sqrt{2}-2\neq0 $. So we just need to consider $q\ge 3$. Then we have
\begin{eqnarray*}
	h_2(q)&=&q\cdot 2^q-2^q+\sqrt{4^q-8}-6q+6\\
	&>&3\cdot 2^q-2^q-6q+6\\
	&=&2\cdot 2^q-6q+6\\
	&\ge& 2\cdot 2^3-6\cdot 3+6=4>0.
\end{eqnarray*}
This also contradicts the equation $ \lambda_1(D(T))+2\lambda_2(D(T))-6q+6 =0$.

\noindent\textbf{Form 3.} $g(\lambda)=(\lambda-\lambda_1(D(T)))(\lambda-\lambda_3(D(T)))^2$.

The specific steps are the same as above. We need to prove that there exist integers $q\ge 2$ to make
the equation $ \lambda_1(D(T))+2\lambda_3(D(T))-6q+6 =0$ hold, that is,
$l_1(q)=q\cdot 4^q-4^q-\sqrt{16^q-4}-6q+6=0$ or $l_2(q)=q\cdot 2^q-2^q-\sqrt{4^q-8}-6q+6=0$.
Note that $l_1(2)=10-6\sqrt{7}\neq 0$, $l_2(2)=-2-2\sqrt{2}\neq 0$ and $l_2(3)=4-2\sqrt{14}\neq 0$. By calculations, we have
\begin{eqnarray*}
	l_1(q)&=&q\cdot 4^q-4^q-\sqrt{16^q-4}-6q+6\\
	&>&3\cdot 4^q-4^q-4^q-6q+6\\
	&=&4^q-6q+6>0\ \ \mbox{when $q \ge 3$ }
\end{eqnarray*}
and
\begin{eqnarray*}
	l_2(q)&=&q\cdot 2^q-2^q-\sqrt{4^q-8}-6q+6\\
	&>&4\cdot 2^q-2^q-2^q-6q+6\\
	&=&2\cdot 2^q-6q+6>0\ \ \mbox{when $q \ge 4$ }.
\end{eqnarray*}
The results contradict the equation $ \lambda_1(D(T))+2\lambda_3(D(T))-6q+6 =0$.

Taken together, the tree cannot contain exactly three distinct distance eigenvalues if it is not a star. The proof is complete. \qed

\section{Trees with distance eigenvalue equal to $ -1 $}
At the beginning of this section, we characterize a series of trees containing $-1$ as their distance eigenvalues. Now we shall prove that any tree in $ \mathcal{T}(4n+2) $ contains $ -1 $ as its distance eigenvalue.

{\flushleft \textbf{Proof of Theorem \ref{-1}.}}
We asset that all trees in $ \mathcal{T}(4n+2) $ have $ -1 $ as their distance eigenvalues and the sum of components of each corresponding
eigenvector is 0.  We apply induction on $ n $ to prove this assertion.
Clearly, $ \mathcal{T}(2)=\{P_2\} $ and $ \mathcal{T}(6)=\{P_6\} $ and the assertion is trivial.
So suppose $ n=k+1 $ and the assertion holds for $ n\le k $.
For any tree $ T_k\in \mathcal{T}(4k+2) $, we have a vector $\mathbf{x}_k=(x_1^{(k)},x_2^{(k)},\ldots, x_{4k+1}^{(k)},x_{4k+2}^{(k)})^T $ to satisfy $D(T_k)\mathbf{x}_k=-\mathbf{x}_k$ and $\sum_{j=1}^{4k+2}x_j^{(k)}=0$. Let $ P_4=w_1w_2w_3w_4 $ be a path on four vertices. We assume that the tree $ T_{k+1} $ is obtained by attaching an edge between the endvertex $ w_1 $ in $ P_4$ and a vertex in $ T_k $, called $ v_p $ without loss of generality. It is clear that $ T_{k+1} $ belongs to $ \mathcal{T}(4k+6)$.
Let $ \mathbf{x}_{k+1}=(\mathbf{x}_{k}^T,y_1,y_2,y_3,y_4)^T=(x_1^{(k)},x_2^{(k)},\ldots, x_{4k+1}^{(k)},x_{4k+2}^{(k)},x_{p}^{(k)},-x_{p}^{(k)},-x_{p}^{(k)},x_{p}^{(k)})^T$ where $ y_i $ is corresponding to the vertex $ w_i $ of the pendent path $ P_4 $.
To prove that $ -1 $ is an eigenvalue of $ D(T_{k+1}) $, it suffices to show that $D(T_{k+1})\mathbf{x}_{k+1}=-\mathbf{x}_{k+1}$.
By a simple calculation, we obtain
\begin{eqnarray*}
	D(T_{k+1})=\left[
	\begin{array}{ccc|cccc}
		\ &\ &\ &\ &\ &\ &\ \\ 
		\ &{\huge D(T_k)}&\ &\ &\ &{\huge B}&\ \\
		\ &\ &\ &\ &\ &\ &\ \\  \hline
		\ &\ &\ & 0&1&2&3 \\
		\ &\ &\ & 1&0&1&2 \\
		\ &{\huge B^T} &\ &2&1&0&1 \\
		\ &\ &\ &3&2&1&0
	\end{array}
	\right]
\end{eqnarray*}
where
\begin{eqnarray*}
	B=\left[
	\begin{array}{cccc}
		d_{1,p}+1&d_{1,p}+2&d_{1,p}+3&d_{1,p}+4 \\ 
		d_{2,p}+1&d_{2,p}+2&d_{2,p}+3&d_{2,p}+4 \\ 
		\vdots&\vdots&\vdots&\vdots \\
		d_{4k+1,p}+1&d_{4k+1,p}+2&d_{4k+1,p}+3&d_{4k+1,p}+4 \\ 
		d_{4k+2,p}+1&d_{4k+2,p}+2&d_{4k+2,p}+3&d_{4k+2,p}+4 \\
		
	\end{array}
	\right].
\end{eqnarray*}
This impiles that
\begin{eqnarray*}
	-x_i^{(k)}&=&-x_i^{(k)}+0\\
	&=&\sum_{j=1}^{4k+2}d_{i,j}x_j^{(k)}+(d_{i,p}+1)y_1+(d_{i,p}+2)y_2+(d_{i,p}+3)y_3+(d_{i,p}+4)y_4
\end{eqnarray*}
for $ i=1,2,\ldots,4k+1,4k+2 $.
Besides, there always holds
\begin{eqnarray*}
	-y_1 &=&-x_p^{(k)}=(-x_p^{(k)}+0)-x_p^{(k)}-2x_p^{(k)}+3x_p^{(k)}\\  &=&\sum_{j=1}^{4k+2}(d_{j,p}+1)x_j^{(k)}+y_2+2y_3+3y_4,
\end{eqnarray*}
and
\begin{eqnarray*}
	-y_2 &=&x_p^{(k)}=(-x_p^{(k)}+2\times 0)+x_p^{(k)}-x_p^{(k)}+2x_p^{(k)}\\  &=&\sum_{j=1}^{4k+2}(d_{j,p}+2)x_j^{(k)}+y_1+y_3+2y_4,
\end{eqnarray*}
and
\begin{eqnarray*}
	-y_3 &=&x_p^{(k)}=(-x_p^{(k)}+3\times 0)+2x_p^{(k)}-x_p^{(k)}+x_p^{(k)}\\  &=&\sum_{j=1}^{4k+2}(d_{j,p}+3)x_j^{(k)}+2y_1+y_2+y_4,
\end{eqnarray*}
and
\begin{eqnarray*}
	-y_4 &=&x_p^{(k)}=(-x_p^{(k)}+4\times 0)+3x_p^{(k)}-2x_p^{(k)}-x_p^{(k)}\\  &=& \sum_{j=1}^{4k+2}(d_{j,p}+4)x_j^{(k)}+3y_1+2y_2+y_3.\end{eqnarray*}
Thus we conclude that $ \mathbf{x}_{k+1}= (x_1^{(k)},x_2^{(k)},\ldots, x_{4k+1}^{(k)},x_{4k+2}^{(k)},x_p^{(k)},-x_p^{(k)},-x_p^{(k)},x_p^{(k)})^T $ is the corresponding eigenvector of the eigenvalue $ -1 $ of $ D(T_{k+1}) $ and the sum of components of $\mathbf{x}_{k+1}$ is 0 as well. This proof is complete because of the arbitrary choices of the tree $ T_k\in \mathcal{T}(4k+2) $ and the joining vertex $ v_p $ of $ T_k $. \qed


Considering the multiplicity of an eigenvalue, we pose the following problem naturally: For any $n\in \mathbb{N}$ and tree $ T\in \mathcal{T}(4n+2) $, is it right that the multiplicity of $ -1 $ as an eigenvalue of $ D(T) $ is one?
We start with the more easily addressable issues and give a positive answer to the above problem for $ P_{4n+2} $.
\begin{lemma}\label{path}{\rm(\cite{RP})}
	Let $ P_n $ be a path on $ n>2 $ vertices. The largest eigenvalue $ d_1 $ and other eigenvalues $ d_i $ of its distance matrix $ D(P_n) $ are as follows.
	
	\noindent$ (1)$  $ d_1=1/(\cosh\theta -1) $, where $ \theta $ is the positive solution of $\tanh(\theta/2)\tanh(n\theta/2)=1/n$.
	
	\noindent$ (2) $ $ d_i = 1/(\cos\theta -1) $, where $ \textbf{(I)} $ $ \theta $ is one of the $  [(n- 1)/2] $ solutions of $ \tan(\theta/2) \tan(n\theta/2) =-1 /n $ in the interval $ (0,\pi) $, or $ \textbf{(II)} $  $ \theta= (2m - 1)\pi/n $ for $ m = 1,\ldots,[n/2] $.
	
\end{lemma}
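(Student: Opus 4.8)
The plan is to reduce the eigenvalue problem $D(P_n)\mathbf{x}=\lambda\mathbf{x}$ to a constant-coefficient three-term recurrence for the entries of the eigenvector, solve it explicitly, and then convert the two boundary equations into the transcendental conditions quoted in the statement. Label the vertices $1,2,\ldots,n$ along the path, so that the $(i,j)$-entry of $D(P_n)$ is $|i-j|$, and write $S_i:=\sum_{j=1}^n|i-j|\,x_j=\lambda x_i$. First I would take discrete second differences: a direct computation gives $S_{i+1}-2S_i+S_{i-1}=2x_i$ for every interior index $2\le i\le n-1$. Substituting $S_i=\lambda x_i$ turns this into
$$x_{i-1}+x_{i+1}=\Bigl(2+\tfrac{2}{\lambda}\Bigr)x_i,\qquad 2\le i\le n-1.$$
Setting $2+2/\lambda=2\cos\theta$ (respectively $2\cosh\theta$) immediately yields $\lambda=1/(\cos\theta-1)$ (respectively $\lambda=1/(\cosh\theta-1)$), and the general interior solution is $x_i=A\cos(i\theta)+B\sin(i\theta)$ for the negative eigenvalues, with the analogous hyperbolic combination for the single positive one.

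The second step is to record the two boundary equations, which are the only data the interior recurrence misses. Computing first differences at the ends gives $\lambda(x_2-x_1)=2x_1-\sigma$ and $\lambda(x_n-x_{n-1})=\sigma-2x_n$, where $\sigma=\sum_j x_j$. A cleaner bookkeeping route, available since the inverse is treated in \cite{GL}, is to use the Graham--Lov\'asz formula: for $P_n$ one has $D(P_n)^{-1}=-\tfrac12 L+\tfrac{1}{2(n-1)}(\mathbf{e}_1+\mathbf{e}_n)(\mathbf{e}_1+\mathbf{e}_n)^T$, with $L$ the path Laplacian, so the eigenproblem for $D(P_n)^{-1}$ has the same tridiagonal interior part plus a rank-one corner term supported on the two endpoints. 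The third step is to feed the interior solution into the boundary equations and extract the consistency condition on $\theta$. I expect the corner term to split the analysis according to whether $c:=x_1+x_n$ vanishes. When $c=0$ the eigenvector is antisymmetric, the corner contribution drops out, and the problem collapses to a Laplacian eigenvector subject to $x_1+x_n=0$; these are exactly the odd-indexed modes $\theta=(2m-1)\pi/n$, giving the $[n/2]$ eigenvalues of type $\mathbf{(II)}$. When $c\ne0$ the rank-one term perturbs the symmetric modes, and after collecting the boundary equations and applying sum-to-product identities I expect the consistency condition to factor into $\tan(\theta/2)\tan(n\theta/2)=-1/n$, yielding the $[(n-1)/2]$ roots of type $\mathbf{(I)}$ in $(0,\pi)$; the same manipulation with hyperbolic functions produces the single dominant root of $\tanh(\theta/2)\tanh(n\theta/2)=1/n$, which is $d_1$.

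Finally, a counting check closes the argument: $[n/2]+[(n-1)/2]+1=n$, so the roots produced account for the full spectrum, and each admissible $\theta$ yields a distinct $\lambda=1/(\cos\theta-1)$ by monotonicity of $\cos$ on $(0,\pi)$. The main obstacle is the third step: reducing the raw boundary consistency condition to the compact products $\tan(\theta/2)\tan(n\theta/2)=-1/n$ and $\tanh(\theta/2)\tanh(n\theta/2)=1/n$. This is where the $1/n$ on the right-hand side must emerge from genuine trigonometric simplification of the boundary data rather than from mere bookkeeping, and where one must verify that the type-$\mathbf{(I)}$ equation has precisely $[(n-1)/2]$ roots in $(0,\pi)$ and that the hyperbolic equation has a unique positive root. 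The remaining steps — solving the recurrence and tallying multiplicities — are routine.
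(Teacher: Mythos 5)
First, a point of reference: the paper never proves this lemma at all---it is imported verbatim from Ruzieh and Powers \cite{RP}---so your argument has to stand entirely on its own. Most of its scaffolding does stand: the second-difference identity $S_{i+1}-2S_i+S_{i-1}=2x_i$ for $S_i=(D(P_n)\mathbf{x})_i$, the two end equations $\lambda(x_2-x_1)=2x_1-\sigma$ and $\lambda(x_n-x_{n-1})=\sigma-2x_n$, the specialization of the Graham--Lov\'asz formula to $D(P_n)^{-1}=-\tfrac12 L+\tfrac{1}{2(n-1)}(\mathbf{e}_1+\mathbf{e}_n)(\mathbf{e}_1+\mathbf{e}_n)^T$, the identification of the eigenvectors with $x_1+x_n=0$ as the odd Laplacian modes $\theta=(2m-1)\pi/n$ (your type \textbf{(II)}), and the closing count $[n/2]+[(n-1)/2]+1=n$ are all correct.

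The genuine gap is that the step carrying the lemma's actual content is never performed: the equations $\tan(\theta/2)\tan(n\theta/2)=-1/n$ and $\tanh(\theta/2)\tanh(n\theta/2)=1/n$ enter your text only through ``I expect the consistency condition to factor into,'' and you yourself flag this as ``the main obstacle.'' A proof cannot defer its only nontrivial step, especially when the right-hand side $\pm 1/n$ is precisely what distinguishes the lemma from a generic remark about tridiagonal recurrences. For the record, the step does go through, and the idea that makes it short is the one you only half-use: $D(P_n)$ commutes with the reversal permutation, so one may take each eigenvector symmetric or antisymmetric; the antisymmetric ones are type \textbf{(II)}, and for a symmetric one you can center the ansatz as $x_i=\cos\bigl((i-\tfrac{n+1}{2})\theta\bigr)$, after which the whole eigenvalue condition reduces (via the rank-one form of $D(P_n)^{-1}$, with $\mu=\cos\theta-1$ and $x_1+x_n=2x_1$) to the single end equation $x_1-x_2+2(\cos\theta-1)x_1=\tfrac{2x_1}{n-1}$. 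Angle-addition formulas give $x_1-x_2+2(\cos\theta-1)x_1=-2\sin(\theta/2)\sin(n\theta/2)$ and $x_1=\cos\bigl(\tfrac{(n-1)\theta}{2}\bigr)=\cos(n\theta/2)\cos(\theta/2)+\sin(n\theta/2)\sin(\theta/2)$, and combining the two yields $n\sin(\theta/2)\sin(n\theta/2)=-\cos(\theta/2)\cos(n\theta/2)$, i.e.\ $\tan(\theta/2)\tan(n\theta/2)=-1/n$; the identical hyperbolic computation gives $\tanh(\theta/2)\tanh(n\theta/2)=1/n$, whose positive root is unique because the left side increases from $0$ to $1$ on $(0,\infty)$. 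Even with this done you would still owe the count that the trigonometric equation has exactly $[(n-1)/2]$ roots in $(0,\pi)$ (an intermediate-value argument on the branches of $\tan(n\theta/2)$), together with the observation that type \textbf{(I)} roots cannot collide with type \textbf{(II)} angles because $\tan(n\theta/2)$ has poles at the latter---both are needed before your counting argument can certify that the list of eigenvalues is complete. As written, your submission is a correct plan whose decisive computation and root-counting are missing.
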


\begin{thm}
	Let $ P_k $ be a path on $ k\ge 2 $ vertices. The value $ -1 $ is an eigenvalue of $ D(P_k) $ if and only if $ k=4n+2 $ where $ n\in \mathbb{N}$. Moreover, the multiplicity of $ -1 $ as an eigenvalue of its distance matrix is one.
	
\end{thm}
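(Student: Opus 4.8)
The plan is to extract everything from the explicit spectral formula in Lemma \ref{path}, reducing the whole statement to counting how often one particular angle occurs. The first step is the observation that every eigenvalue of $D(P_k)$ other than the largest has the form $1/(\cos\theta-1)$ for some $\theta\in(0,\pi)$, and that $1/(\cos\theta-1)=-1$ is equivalent to $\cos\theta=0$, i.e.\ to $\theta=\pi/2$, the unique such angle in $(0,\pi)$. The largest eigenvalue $1/(\cosh\theta-1)$ is strictly positive since $\cosh\theta>1$ for $\theta>0$, so it is never $-1$. Consequently, $-1$ is an eigenvalue of $D(P_k)$ precisely when $\theta=\pi/2$ occurs among the angles listed in parts (I) and (II) of Lemma \ref{path}, and its multiplicity equals the number of such occurrences.

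The second step treats the two families at $\theta=\pi/2$. For family (II) the angles are $\theta_m=(2m-1)\pi/k$ with $1\le m\le\lfloor k/2\rfloor$, and $\theta_m=\pi/2$ forces $2m-1=k/2$; this has an admissible solution exactly when $k\equiv2\pmod 4$, in which case, writing $k=4n+2$, the unique index is $m=n+1$. For family (I) I would substitute $\theta=\pi/2$ into $\tan(\theta/2)\tan(k\theta/2)=-1/k$; since $\tan(\pi/4)=1$ this collapses to $\tan(k\pi/4)=-1/k$, which I then rule out by a residue analysis modulo $4$: the left-hand side equals $0$ when $k\equiv0$, equals $1$ when $k\equiv1$, equals $-1$ when $k\equiv3$ (and $-1=-1/k$ would force $k=1$), and is a pole of the tangent when $k\equiv2$, so $\theta=\pi/2$ never satisfies the family-(I) equation for $k>2$. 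Hence $\pi/2$ appears among the listed angles if and only if $k\equiv2\pmod 4$, and then exactly once, which gives both the stated equivalence and multiplicity one.

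To turn the count of angles into a genuine multiplicity statement I would note two supporting facts. First, the list in Lemma \ref{path} is complete: it contains $1+\lfloor(k-1)/2\rfloor+\lfloor k/2\rfloor=k$ angles, matching the order of $D(P_k)$, so it records all eigenvalues with multiplicity. Second, the map $\theta\mapsto 1/(\cos\theta-1)$ is strictly increasing on $(0,\pi)$, its derivative being $\sin\theta/(\cos\theta-1)^2>0$, hence injective; therefore distinct listed angles produce distinct eigenvalues and the multiplicity of $-1$ is exactly the number of listed angles equal to $\pi/2$, namely one. The only case outside the scope of Lemma \ref{path} is $k=2$, which I would dispatch directly: the distance matrix of $P_2$ has eigenvalues $\pm1$, so $-1$ is a simple eigenvalue, consistent with $2=4\cdot0+2$. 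The main point requiring care is the family-(I) analysis at $\theta=\pi/2$---in particular the borderline residue $k\equiv2\pmod4$, where the tangent has a pole and one must be sure that $\theta=\pi/2$ genuinely fails to be a family-(I) solution, so that the multiplicity is not inadvertently increased beyond the single contribution from family (II).
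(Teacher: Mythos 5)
Your proof is correct, but it takes a genuinely different route from the paper on two of the three parts. For the direction ``$k=4n+2$ implies $-1$ is an eigenvalue,'' the paper does not use Lemma \ref{path} at all: it invokes Theorem \ref{-1}, since $P_{4n+2}\in\mathcal{T}(4n+2)$, so the path result appears as a special case of the paper's inductive construction of trees with eigenvalue $-1$. You instead get this directly from family (II) of Lemma \ref{path} with $m=n+1$, which is shorter and self-contained. For the converse, both you and the paper use Lemma \ref{path}; in fact your residue-mod-$4$ analysis of $\tan(k\pi/4)=-1/k$ (including the pole at $k\equiv 2$ and the exclusion $-1=-1/k\Rightarrow k=1$ at $k\equiv 3$) fills in a step the paper dismisses as ``obvious.'' The real divergence is the multiplicity statement: the paper proves it by a hands-on eigenvector argument, deriving from $D(P_{4n+2})\mathbf{x}=-\mathbf{x}$ the relations $x_q+x_{4n+3-q}=0$ and $-x_q+x_{4n+3-q}=\pm(x_{2n+1}-x_{2n+2})$, concluding that every component of any $-1$-eigenvector has the same nonzero absolute value, so the eigenspace cannot have dimension two (otherwise a combination with a zero entry would exist). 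You instead count occurrences of $\theta=\pi/2$ in the Ruzieh--Powers list, using that the list has exactly $k$ entries and that $\theta\mapsto 1/(\cos\theta-1)$ is injective on $(0,\pi)$. Your argument is cleaner, but it leans on reading Lemma \ref{path} as a complete enumeration of the spectrum \emph{with multiplicities}; matching the count $1+\lfloor (k-1)/2\rfloor+\lfloor k/2\rfloor=k$ makes this reading plausible but does not by itself rule out a mismatch between list repetitions and true multiplicities. The paper's eigenvector computation buys independence from that reading (and extra structural information about the eigenvector), at the cost of considerably more calculation; your approach buys brevity and also handles the base case $k=2$ explicitly, which Lemma \ref{path} ($k>2$) does not cover and which the paper absorbs into the trivial case $n=0$.
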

\begin{proof}
	It follows from Theorem \ref{-1} that $ P_{4n+2}\in \mathcal{T}(4n+2) $ and $ -1 $ is an eigenvalue of $ D(P_{4n+2}) $.
	Conversely, suppose $ -1 $ is an eigenvalue of $ D(P_k) $. Then we have $ -1 = 1/(\cos\theta -1) $ by Lemma \ref{path} and it implies that $\theta=\pi/2 $ if $ \theta $ is in the interval $ (0,\pi) $. Considering $ \textbf{(I)} $ and $ \textbf{(II)} $ in Lemma \ref{path} (2), it is obvious that $ \tan(\pi/4) \tan(n\pi/4) \neq -1 /k  $ and $\theta=\pi/2 $ when $k=4m-2$. We conclude that $ -1 $ is an eigenvalue of $ D(P_k) $ when $k=4n+2$ for $ n\in \mathbb{N}$.
	
	We show that the multiplicity of $ -1 $ as an eigenvalue of $D(P_{4n+2}) $ is one in the following.	
	The assertion is obviously right for $ n=0 $ so we only consider the case for $n\ge1$. Let $ \mathbf{x}=(x_1,x_2,\ldots, x_{4n+1},x_{4n+2})^T\neq 0$ be an arbitrary eigenvector of the eigenvalue $ -1 $ of $D(P_{4n+2}) $.
	In order to prove this theorem, we need to show that each component of vector $\mathbf{x}$ is not zero, that is, $ x_i\neq 0 $ for any integer $ i\in [1,{4n+2}] $. By a sipmle calculation on $ D(P_{4n+2})\mathbf{x}=-\mathbf{x} $, we have
	\begin{eqnarray}\label{eq1}
	-x_q=\sum_{j=1}^{4n+2}|q-j|x_j
	\end{eqnarray}
	for $q=1,2,\ldots,4n+1,4n+2$. Then we have the following equations by (\ref{eq1}): 	
	\begin{eqnarray*}
		-(x_{2n+1}+x_{2n+2})=[\sum_{q=1}^{2n}(4n+3-2q)(x_q+x_{4n+3-q})]+(x_{2n+1}+x_{2n+2})
	\end{eqnarray*}	
	and
	\begin{eqnarray*}
		-(x_{2n}+x_{2n+3})&=&[\sum_{q=1}^{2n}(4n+3-2q)(x_q+x_{4n+3-q})]+3(x_{2n+1}+x_{2n+2})\\
		&=&-2(x_{2n+1}+x_{2n+2})+3(x_{2n+1}+x_{2n+2})\\
		&=&x_{2n+1}+x_{2n+2},
	\end{eqnarray*}
	and
	\begin{eqnarray*}
		-(x_{2n-1}+x_{2n+4})&=&[\sum_{q=1}^{2n-1}(4n+3-2q)(x_q+x_{4n+3-q})]+5[(x_{2n}+x_{2n+3})+(x_{2n+1}+x_{2n+2})]\\
		&=&-4(x_{2n}+x_{2n+3})-3(x_{2n+1}+x_{2n+2})+5[(x_{2n}+x_{2n+3})+(x_{2n+1}+x_{2n+2})]\\
		&=&(x_{2n}+x_{2n+3})+2(x_{2n+1}+x_{2n+2})\\
		&=&x_{2n+1}+x_{2n+2}.
	\end{eqnarray*}
	By parity of reasoning, it is clear that  $-(x_q+x_{4n+3-q})=x_{2n+1}+x_{2n+2}$, which implies that $$ -2(x_{2n+1}+x_{2n+2})=-\left[\sum_{q=1}^{2n}(4n+3-2q)(x_{2n+1}+x_{2n+2})\right]=-(4n^2+4n)(x_{2n+1}+x_{2n+2}).$$
	Hence
	\begin{eqnarray}\label{eq2}
	x_q+x_{4n+3-q}=0, \  \mbox{for} \ q=1,2,\ldots,2n,2n+1.
	\end{eqnarray}
	Furthermore, we also obtain the following equations by (\ref{eq1}):
	\begin{eqnarray*}
		-x_{2n+1}+x_{2n+2}=-\sum_{q=1}^{2n}x_q-x_{2n+1}+x_{2n+2}+\sum_{q=2n+3}^{4n+2}x_{q} \Rightarrow \sum_{q=1}^{2n}x_q=\sum_{q=2n+3}^{4n+2}x_{q},
	\end{eqnarray*}	
	
	\begin{eqnarray*}
		-x_{2n}+x_{2n+3}=-3\sum_{q=1}^{2n}x_q-x_{2n+1}+x_{2n+2}+3\sum_{q=2n+3}^{4n+2}x_{q} =-x_{2n+1}+x_{2n+2},
	\end{eqnarray*}	
	
	\begin{eqnarray*}
		-x_{2n-1}+x_{2n+4}&=&-5\sum_{q=1}^{2n-1}x_q-3x_{2n}-x_{2n+1}+x_{2n+2}+3x_{2n+3}+5\sum_{q=2n+4}^{4n+2}x_{q}\\ &=&5(x_{2n}-x_{2n+3})-4(x_{2n+1}-x_{2n+2})=x_{2n+1}-x_{2n+2},
	\end{eqnarray*}	
	and
	\begin{eqnarray*}
		-x_{2n-2}+x_{2n+5}&=&-7\sum_{q=1}^{2n-2}x_q-5x_{2n-1}-3x_{2n}-x_{2n+1}+x_{2n+2}+3x_{2n+3}+5x_{2n+4}+7\sum_{q=2n+5}^{4n+2}x_{q}\\
		&=&-7(-x_{2n}-x_{2n-1}+x_{2n+3}+x_{2n+4})+x_{2n+1}-x_{2n+2}\\
		&=&x_{2n+1}-x_{2n+2}.
	\end{eqnarray*}
	By parity of reasoning, it is clear that for $ q=1,2,\ldots,2n,2n+1 $
	
	\begin{align}\label{eq3}
	\begin{split}
	\left \{
	\begin{array}{ll}
	-x_q+x_{4n+3-q}=-x_{2n+1}+x_{2n+2}    &\mbox{when\ } 2n+1-q\equiv 0,1 (\mbox{mod} 4), \\[3mm]
	-x_q+x_{4n+3-q}=x_{2n+1}-x_{2n+2}     &\mbox{when\ } 2n+1-q\equiv 2,3 (\mbox{mod} 4).
	\end{array}
	\right.
	\end{split}
	\end{align}
	(\ref{eq2}) and (\ref{eq3}) give $|x_i|=|x_j|$ for any $i,j=1,\dots {4n+2}$. Since the vector $ \mathbf{x}$ is nonzero, we have $|x_i|=a\in \mathbb{R}\backslash\{0\}$ for any integer $i\in [1,{4n+2}]$.

\end{proof}

Does the set $\mathcal{T}(4n+2)$ cover all the trees whose distance spectra contain $ -1 $? The answer is negative. In the following, we characterize all connected trees of diameter at most three with distance eigenvalues that are equal to $ -1 $. Besides, we show a special tree whose distance spectrum contains $ -1 $ but does not belong to $ \mathcal{T}(4n+2) $.

A \textit{pendent vertex} of a tree is a vertex of degree one.
A \textit{pendent
	neighbor} is a vertex adjacent to a pendent vertex.

\begin{lemma}{\rm(\cite{Collins})}\label{-2}
Let $ T $ be a tree with $ n_1 $ pendent vertices and $ n_1' $ pendent
neighbors. Then among the distance eigenvalues of $ T $,  -2 as an eigenvalue occurs with multiplicity
at least $ n_1-n_1' $.
\end{lemma}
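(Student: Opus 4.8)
The plan is to exhibit $n_1-n_1'$ linearly independent eigenvectors of $D(T)$ for the eigenvalue $-2$, built explicitly from pendent vertices that share a common pendent neighbor.

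The key observation is a distance-twin property. Suppose $u$ and $v$ are two distinct pendent vertices both adjacent to the same pendent neighbor $w$. Since $T$ is a tree, every shortest path from any vertex $x\notin\{u,v\}$ to $u$ (respectively to $v$) must pass through $w$, so $d_{x,u}=d_{x,w}+1=d_{x,v}$; moreover $d_{u,v}=2$. Consider the vector $\mathbf{f}=\mathbf{e}_u-\mathbf{e}_v$, which equals $+1$ at $u$, $-1$ at $v$, and $0$ elsewhere. Computing $D(T)\mathbf{f}$ entrywise: at a vertex $x\notin\{u,v\}$ the entry is $d_{x,u}-d_{x,v}=0$; at $u$ it is $d_{u,u}-d_{u,v}=-2$; and at $v$ it is $d_{v,u}-d_{v,v}=2$. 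Hence $D(T)\mathbf{f}=-2\mathbf{f}$, so $-2$ is an eigenvalue of $D(T)$ with eigenvector $\mathbf{f}$.

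Next I would count independent eigenvectors of this form. For each pendent neighbor $w$, let $k_w$ denote the number of pendent vertices attached to $w$, and fix one of them, say $u_1$. The vectors $\mathbf{e}_{u_1}-\mathbf{e}_{u_j}$ for $j=2,\dots,k_w$ are $k_w-1$ linearly independent eigenvectors for $-2$, each supported on the set of pendent vertices adjacent to $w$. Because each pendent vertex has a unique neighbor, the pendent-vertex sets belonging to distinct pendent neighbors are pairwise disjoint; the associated eigenvectors therefore have disjoint supports and are jointly linearly independent. Summing over all pendent neighbors yields $\sum_w(k_w-1)=n_1-n_1'$ such eigenvectors, since $\sum_w k_w=n_1$ and there are exactly $n_1'$ pendent neighbors. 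This shows that the multiplicity of $-2$ is at least $n_1-n_1'$.

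There is no serious obstacle here: the whole argument rests on the distance-twin identity $d_{x,u}=d_{x,v}$ for two pendent vertices with a common neighbor, which is immediate from the tree structure, together with the disjoint-support observation that makes the independence count transparent. The only point requiring a line of care is confirming that eigenvectors attached to different pendent neighbors cannot overlap, and this follows at once from the uniqueness of each pendent vertex's neighbor.
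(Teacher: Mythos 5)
Your proof is correct. Note that the paper does not prove this lemma at all: it imports it from Collins' paper on factoring distance matrix polynomials, where the result is obtained by showing that $(\lambda+2)^{n_1-n_1'}$ divides the distance characteristic polynomial of the tree, an argument organized around polynomial factorization rather than explicit eigenvectors. Your route is different and, in a useful sense, stronger: the distance-twin identity $d_{x,u}=d_{x,v}$ for two pendent vertices $u,v$ sharing a neighbor $w$ uses only the fact that every path leaving $u$ or $v$ must pass through $w$, so your construction of the eigenvectors $\mathbf{e}_u-\mathbf{e}_v$ with eigenvalue $-2$ is valid in \emph{any} connected graph, not just in trees; the counting step $\sum_w(k_w-1)=n_1-n_1'$ likewise only uses that each pendent vertex has a unique neighbor. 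Your linear-independence argument is also sound: within a single group the vectors $\mathbf{e}_{u_1}-\mathbf{e}_{u_j}$ are independent because each involves a coordinate $u_j$ appearing in no other vector of the group, and across groups the supports are disjoint, so the union is independent. What the polynomial-factorization approach buys in exchange is information tied to the full characteristic polynomial of the tree (useful in Collins' broader context), but for the stated lower bound on multiplicity your elementary argument is cleaner and more general.
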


{\flushleft \textbf{Proof of Remark \ref{sp}.}}
We can easily get by computer that $ Spec(D(P_2))=\{1,-1\} $
and $ Spec(D(S_{2,2}))=\{10, -1, -\frac{5}{2} - \frac{\sqrt{17}}{2}, -\frac{5}{2}+ \frac{\sqrt{17}}{2},[-2]^2\} $.

Denote by $ \mathbb{T}_d $ the set of trees of diameter $ d $. We distinguish the following two cases:

\noindent{\bf Case 1.} $ d\le 2. $

The result is trivial for the case $ d=1 $ and $ Spec(D(K_2))=\{1,-1\} $. For $d=2$, the graphs in $ \mathbb{T}_2 $ are stars and $ \mathbb{T}_2 =\{K_{1,t+2}|\ t\in \mathbb{N}\}$. Let $J_{t+2}$ be the matrix of order $t+2 $ with all entries equal to 1. Then the distance matrix of $ K_{1,t+2} $ is
\begin{eqnarray*}
	D=\left[
	\begin{array}{ccc|c}
		\ &\ &\ &1\\ 
		\ &{\huge 2(J-I)_{t+2}}&\ &\vdots  \\
		\ &\ &\ &1 \\  \hline
		1 &\cdots &1 & 0
	\end{array}
	\right]
\end{eqnarray*}
and the distance characteristic polynomial of $ K_{1,t+2} $ is

\begin{eqnarray*}
	det(\lambda I-D)&=&(\lambda+2)^{t+1}\left|
	\begin{array}{ccc}
		\lambda-(2t+2)&\ &-1 \\
		\ &\ &\  \\
		-(t+2)&\ &\lambda
	\end{array}
	\right|\\[2mm]
	&=&(\lambda+2)^{t+1}\left[\lambda^2-(2t+2)\lambda-(t+2)\right].
\end{eqnarray*}
If $-1$ is an eigenvalue of $D(K_{1,t+2})$, then $(-1)^2-(2t+2)(-1)-(t+2)=0$, which implies that $t=-1 $, contradicting with $ t\ge 0 $.	
Thus, only graph $P_2$ contains $-1$ as its distance  eigenvalue among all trees with diameter $ d\le 2 $.

\noindent{\bf Case 2.} $ d=3. $

Note that $ \mathbb{T}_3 =\{S_{s+1,t+1}|\ s\in \mathbb{N}\ \mbox{and}\ t\in \mathbb{N}\}$. For any tree $ T\in \mathbb{T}_3 $, there is an equitable partition of $ D(T) $ and the corresponding quotient matrix is

\begin{eqnarray*}
	B(T)=\left[
	\begin{array}{cccc}
		2s &1 &2 &3(t+1)\\ [2mm]
		s+1 &0&1 &2(t+1) \\ [2mm]
		2(s+1) &1 &0 &t+1 \\ [2mm]
		3(s+1) &2 &1 & 2t
	\end{array}
	\right],
\end{eqnarray*}
the characteristic polynomial of the matrix $ B(T) $ is
\begin{eqnarray*}
	f(\lambda)=\lambda^4-(2t+ 2s)\lambda^3 -(5st + 14s+ 14t+ 20)\lambda^2 - (4st + 16s + 16t + 32)\lambda - 4s - 4t - 12.
\end{eqnarray*}
Since the partition is equitable, then by Lemma \ref{lem-eq}, each eigenvalue of the quotient matrix $ B(T) $ is also an eigenvalue of the distance matrix $ D(T) $. By Lemma \ref{-2}, we obtain that the value $-2$ is an eigenvalue of $ D(T) $ for any $ T\in \mathbb{T}_3 $ and its multiplicity
is at least $ s+t $. Clearly, $ |V(T)|= s+t+4$ and $-2$ is not a root of the characteristic polynomial $ f(\lambda) $ because of $ f(-2) =-12st - 12s - 12t - 12<0$. So the multiplicity of the eigenvalue $-2$ is $ s+t $. Besides, the other four distance eigenvalues of $ T$ are the four roots of the equation $	f(\lambda)=0$. If $-1$ is an eigenvalue of $ D(T) $, then
$$f(-1)=-st+1=0.$$
It follows that the tree with diameter 3 contains $ -1 $ as its distance eigenvalue when $t=s=1$, saying $ T\cong S_{2,2} $. \qed

We find that the trees obtained by successively adding an edge joining an endvertex of $ P_4 $ to a vertex in $S_{2,2}$ do not contain $-1$ as their distance eigenvalues. Referring to the proof of Theorem \ref{-1}, one reason is that the sum of components of $ \textbf{x} $ is not 0 where  $D(S_{2,2})\textbf{x}=-\textbf{x}$.

\end{document}